
\baselineskip 12pt

\def\bbuildrel#1_#2^#3{\mathrel{\mathop{\kern 0pt#1}\limits_{#2}^{#3}}}

\def\do{\hbox to 20pt{\rightarrowfill}}

\def\NN{\mathbb N}

\def\RR{\mathbb R}
\def\QQ{\mathbb Q}

\def\Pot{\hbox{$\mathcal P$}}

\def\bs{\bigskip}
\def\ms{\medskip}

\def\w{\thinspace\hbox{\hsize 14pt \rightarrowfill}\thinspace}

\def\0{\hbox{$\emptyset$}}
\def\A{\hbox{$\mathcal A$}}
\def\I{\hbox{$\mathcal I$}}

\def\s{\hbox{$\sigma$}}

\def\sub{\subseteq}

\def\M{\hbox{$\mathcal M$}}
\def\N{\hbox{$\mathcal N$}}
\def\F{\hbox{$\mathcal F$}}

\def\C{\mathscr{C}}
\def\K{\mathscr{K}}

\def\M{\mathscr{M}}

\def\D{\mathscr{D}}
\def\S{\mathscr{S}}

\def\Ra{\Rightarrow}

\def\supp{\rm supp}



\documentclass[a4paper,12pt]{amsart}

\usepackage[final]{changes} 

\usepackage{amssymb}
\usepackage{mathrsfs}

\usepackage{mathabx}
\vfuzz2.4pt 
\hfuzz2pt 

\newcommand{\cantor}{2^{\NN}}

\newtheorem{theorem}{Theorem}[section]
\newtheorem{lemma}[theorem]{Lemma}

\newtheorem{proposition}[theorem]{Proposition}
\newtheorem{remark}[theorem]{Remark}

\numberwithin{equation}{section}
\newtheorem{claim}[theorem]{Claim}


\begin{document}

\title{On Mazurkiewicz's sets, 
	thin \s-ideals of compact sets
	and the space of probability measures on the rationals}

\author{R. Pol and P. Zakrzewski}
\address{Institute of Mathematics, University of Warsaw, ul. 
02-097 Warsaw, Poland}
\email{pol@mimuw.edu.pl, piotrzak@mimuw.edu.pl}


\subjclass[2010]{03E15, 54H05, 28A33, 28A12}


\keywords{space of probability measures, uniform tightness, \s-ideal of compact sets, capacity}

\begin{abstract}
 We shall establish some properties of thin $\sigma$-ideals of compact sets in compact metric spaces (in particular, the $\sigma$-ideals of compact
null-sets for thin subadditive capacities), and we shall refine the celebrated theorem of David Preiss that there exist compact non-uniformly tight sets of probability measures on the rationals.

Both  topics will be based on a construction of Stefan Mazurkiewicz from his 1927 paper containing a solution of a Urysohn's problem in dimension theory. 
\end{abstract}

\maketitle

\section{Introduction}\label{introduction}

The Mazurkiewicz sets appeared in \cite{M} as a key element of his solution of a fundamental problem of Urysohn. Embedded in the dimension theory context, this brilliant construction was subsequently somewhat forgotten.

Years later, some of its variations were rediscovered in different settings, demonstrating its usefulness in other areas of topology and measure theory.

In Section \ref{Mazurkiewicz}, we recall the original Mazurkiewicz construction, and  present some of its modifications, suitable for our purposes. 

We shall use Mazurkiewicz's sets in two ways.

Firstly, we shall consider regularity properties 
of capacities and Borel measures on a compactum (i.e., a compact metrizable space) $X$.
Let us recall that a Borel measure $\mu$ on  $X$ is  {\sl semifinite} if each Borel set of positive $\mu$-measure contains a Borel set of finite positive $\mu$-measure.
Let us also recall that a capacity on $X$ is {\sl thin} if there is no uncountable family  of pairwise disjoint compact subsets of $X$ of positive capacity, cf. \cite{k-l-w}.

 We denote by $K(X)$ the space of compact subsets of $X$, equipped with the Vietoris topology, cf. \cite{k}.


\smallskip
   
We shall establish the following two theorems. In fact, we shall discuss this topic in a more general setting concerning \s-ideals of compact sets, and the theorems will be derived from some results about \s-ideals, cf. Section \ref{thin}.
 
\begin{theorem}\label{thin measures}
	Let  $X$ be  a compactum. Let $\mu$ be a non-atomic semifinite Borel measure on $X$ such that the collection of compact $\mu$-null sets is coanalytic in $K(X)$.
		
	 If $\mu$ is not \s-finite, then there is a $G_{\delta\sigma}$-set $M$ in $X$ such that:
	  \begin{enumerate}
		\item[(i)]  $\mu(G \setminus M)=0$ for no $G_\delta$-set $G$ in $X$ containing $M$,
	 	\item[(ii)]  there is a semifinite Borel measure $\mu'\ll\mu$ on $X$ such that $\mu'(M)=0$ but $M$ is not contained in any $\mu'$-null $G_\delta$-set in $X$.
	 
\end{enumerate}

\end{theorem}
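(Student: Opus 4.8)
The plan is to work throughout with the coanalytic $\sigma$-ideal $\mathcal I=\{K\in K(X):\mu(K)=0\}$ of compact $\mu$-null sets, to produce $M$ by a Mazurkiewicz-type construction carried out inside a rich, continuously indexed family of pairwise disjoint compact sets of positive measure, and only at the end to manufacture the auxiliary measure $\mu'$. The whole difficulty sits in the fact that $\mu$ is \emph{not} $\sigma$-finite: for a $\sigma$-finite Borel measure on a metric space every Borel set is, modulo a null set, a $G_\delta$, so neither (i) nor the $\mu'$-version in (ii) could occur; thus the construction must exploit non-$\sigma$-finiteness combinatorially rather than appeal to any soft outer-regularity.

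First I would show that non-$\sigma$-finiteness, together with semifiniteness and non-atomicity, forces $\mathcal I$ to be far from thin. By recursion on $\alpha<\omega_1$ one chooses pairwise disjoint Borel sets $A_\alpha$ with $0<\mu(A_\alpha)<\infty$: at each stage, if the complement of $\bigcup_{\beta<\alpha}A_\beta$ has positive measure, semifiniteness supplies a further finite positive piece. The recursion cannot stop at a countable stage, since a countable family of finite-measure pieces with null remainder would exhibit $\mu$ as $\sigma$-finite; inner regularity then replaces the $A_\alpha$ by compact sets of positive measure, so $\mathcal I$ is not thin. For a Mazurkiewicz construction I need more than an $\omega_1$-sized disjoint family: I need a perfect, continuously parametrized scheme $\{D_t:t\in 2^{\mathbb N}\}$ of pairwise disjoint compact sets of finite positive measure, with the fibration $x\mapsto t$ continuous on $D=\bigcup_t D_t$. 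Here the coanalyticity of $\mathcal I$ enters decisively, through the dichotomy for coanalytic $\sigma$-ideals developed in Section~\ref{thin}: a non-thin coanalytic $\sigma$-ideal carries such a Cantor scheme of witnesses, and non-atomicity lets me keep the fibers of finite measure and split them when the tree construction requires.

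Next I would build $M$ by the Mazurkiewicz method of Section~\ref{Mazurkiewicz} inside $D$, diagonalizing the fiberwise traces $M\cap D_t$ against every possible $G_\delta$ hull so that $M$ is not equivalent, modulo a $\mu$-null set, to any $G_\delta$ set. The point of the tree is that, given any $G_\delta$ set $G\supseteq M$, a positive-measure family of fibers is forced on which $G$ must spill over $M$ by a definite amount; summing these excesses over that family gives $\mu(G\setminus M)>0$, which is exactly~(i). I would isolate this as an abstract lemma about $\mathcal I$ — producing a $G_{\delta\sigma}$ set that is, on a perfect set of fibers, neither contained in nor co-null in any $G_\delta$ member of the relevant hull — proved once in Section~\ref{thin} and here merely specialized to the measure $\mu$.

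Finally, for (ii) I would take $\mu'$ to be $\mu$ restricted to the $\sigma$-compact body $D$ of the scheme and reweighted across the continuum of fibers, arranging simultaneously that (a) $\mu'\ll\mu$ and $\mu'$ is semifinite but, being supported on an uncountable disjoint family of positive pieces, non-$\sigma$-finite; (b) the reweighting annihilates the fiber-traces of $M$, so that $\mu'(M)=0$; and (c) every $G_\delta$ hull of $M$ still captures positive $\mu'$-mass on a positive family of fibers, so $M$ lies in no $\mu'$-null $G_\delta$. I expect the main obstacle to be precisely this balancing: the traces $M\cap D_t$ must be small enough to be annihilated by a genuine semifinite $\mu'\ll\mu$, yet spread densely enough across the perfect family of fibers that no single $G_\delta$ can both engulf $M$ and avoid accumulating $\mu'$-mass — all while keeping $M$ of the exact class $G_{\delta\sigma}$ and keeping the recursive bookkeeping compatible with the coanalyticity of $\mathcal I$. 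This is where the Mazurkiewicz tree, rather than any general regularity theorem, carries the proof.
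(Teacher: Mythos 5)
Your skeleton is the one the paper actually follows: non-$\sigma$-finiteness of the semifinite measure $\mu$ means exactly that $\tilde I_{\mu}$ fails c.c.c., hence (by the inner approximation property coming from tightness of finite Borel measures) $I_{\mu}$ is not thin; the Kechris--Louveau--Woodin theorem for coanalytic $\sigma$-ideals then yields a continuous $\Phi:2^{\mathbb N}\to K(X)$ with pairwise disjoint values of positive measure; and the Mazurkiewicz construction is run for the fiber map $f:\bigcup_t\Phi(t)\to 2^{\mathbb N}$, which is precisely Lemma \ref{main lemma} and the ``moreover'' part of Theorem \ref{thin ideals}. Two corrections, one cosmetic and one substantive. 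The cosmetic one: your mechanism for (i) is misstated. There is no measure on the index Cantor set, so ``a positive-measure family of fibers \dots summing these excesses'' is neither meaningful nor needed; the Mazurkiewicz property gives that any $G_\delta$-set $G\supseteq M$ contains one \emph{entire} fiber $\Phi(t)$, and since $M$ is a partial selector, $|M\cap\Phi(t)|\le 1$, so non-atomicity already yields $\mu(G\setminus M)\ge \mu(\Phi(t)\setminus M)=\mu(\Phi(t))>0$. One fiber suffices. (Similarly, your concern about keeping the fibers of finite measure is idle: finiteness of $\mu$ on the sets $\Phi(t)$ is never used; non-atomicity is needed only to make singletons null.)

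The substantive point is the step you flag as ``the main obstacle'' and leave unresolved --- the balancing of the traces $M\cap D_t$ against a reweighting. This obstacle does not exist, and its non-resolution is the only genuine gap in your write-up. No reweighting or annihilation is required: define $\mu'(A)=\sum_{t}\mu(A\cap\Phi(t))$ for $A\in Bor(X)$ (the sum taken as the supremum of finite partial sums). Then $\mu'\ll\mu$; $\mu'$ is semifinite because a set of positive $\mu'$-measure meets some single fiber in positive $\mu$-measure and semifiniteness of $\mu$ localizes a finite positive piece inside that fiber, where $\mu'$ and $\mu$ agree; $\mu'(M)=0$ is \emph{automatic} from the selector property, since every trace $M\cap\Phi(t)$ is at most a singleton and hence $\mu$-null by non-atomicity --- there is nothing to arrange; and no $G_\delta$ hull of $M$ is $\mu'$-null, because it swallows a whole fiber, which has positive $\mu'$-mass by the very definition of $\mu'$. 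Your worry that the traces must be ``spread densely enough'' that no $G_\delta$ can engulf $M$ cheaply is exactly what the Mazurkiewicz diagonalization has already discharged. Inserting this one formula turns your proposal into the paper's proof, where the corresponding $\sigma$-ideal $I'=\{A: A\cap\Phi(t)\in\tilde I_{\mu}\hbox{ for all }t\}$ of Theorem \ref{thin ideals} is realized as $\tilde I_{\mu'}$.
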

	
	\begin{theorem}\label{thin capacities}
		Let  $X$ be  a compactum. Let $\gamma$ be a non-atomic subadditive capacity on $X$.	
		
		If $\gamma$ is not thin, then there is a $G_{\delta \sigma}$-set $M$ in $X$ such that:
		\begin{enumerate}
			\item[(i)]  $\gamma(G \setminus M)=0$ for no $G_\delta$-set $G$ in $X$ containing $M$,
			\item[(ii)] there is a subadditive capacity $\gamma'\ll\gamma$ on $X$  such that $\gamma'(M)=0$ but $M$ is not contained in any $\gamma'$-null $G_\delta$-set in $X$. 
				
	\end{enumerate}
		
\end{theorem}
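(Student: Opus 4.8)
The plan is to deduce Theorem~\ref{thin capacities} from the general results on thin $\sigma$-ideals of compact sets announced for Section~\ref{thin}, the reduction being effected by passing to the null ideal of $\gamma$. First I would set $I_\gamma=\{K\in K(X):\gamma(K)=0\}$ and verify that it is a $\sigma$-ideal of compact sets: hereditariness is just monotonicity of $\gamma$, while closure under countable unions with compact union follows from finite subadditivity together with the continuity-from-below property of a capacity (the increasing finite unions are $\gamma$-null, so their limit is). Relying on the upper semicontinuity that $\gamma$ satisfies on $K(X)$, each set $\{K:\gamma(K)\ge\varepsilon\}$ is closed, whence $I_\gamma$ is a $G_\delta$, and in particular coanalytic, subset of $K(X)$ -- this is the definability input the general machinery needs (and is automatic here, unlike in Theorem~\ref{thin measures} where it must be postulated). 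The decisive point is that ``$\gamma$ is not thin'' says \emph{precisely} that $I_\gamma$ carries an uncountable family of pairwise disjoint compact sets lying outside $I_\gamma$; that is, $I_\gamma$ is a non-thin $\sigma$-ideal. This is the exact analogue, for capacities, of the failure of $\sigma$-finiteness in Theorem~\ref{thin measures}, and it is what drives the Mazurkiewicz construction.

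Next I would feed $I_\gamma$ into the $\sigma$-ideal theorem. From non-thinness together with the coanalyticity of $I_\gamma$ one extracts a \emph{perfect} family $\{K_z:z\in 2^{\mathbb N}\}$ of pairwise disjoint compacta of positive $\gamma$-capacity depending continuously (in the Vietoris topology) on the parameter $z$, equivalently a fibered compactum $Z\subseteq 2^{\mathbb N}\times X$ with fibers $K_z$. Inside each fiber I would use the non-atomicity of $\gamma$ to carve out a proper subset whose complement within $K_z$ still has positive capacity; running the Mazurkiewicz construction of Section~\ref{Mazurkiewicz} along the tree of this Cantor scheme then assembles these fibral pieces into a set $M$ that is a countable union of $G_\delta$ pieces selected level by level, hence of class $G_{\delta\sigma}$. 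Property~(i) should fall out of the construction by a category argument over the parameter: any $G_\delta$ set $G\supseteq M$ meets a comeager-in-$z$ set of fibers, and by the way $M$ was thinned inside each $K_z$ this forces $\gamma(G\setminus M)>0$, so $M$ cannot be trimmed to a $G_\delta$ modulo a $\gamma$-null set.

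Finally, for~(ii) I would manufacture the auxiliary capacity $\gamma'$. The natural candidate is to reweight $\gamma$ across the parametrization so that the fibral pieces constituting $M$ are driven to capacity zero while the complementary pieces of each $K_z$ retain positive capacity; concretely $\gamma'$ is obtained by redistributing $\gamma$ against the fiber projection through a continuous weight vanishing on $M\cap K_z$, which keeps $\gamma'$ subadditive and, being built from $\gamma$ itself, guarantees $\gamma'\ll\gamma$. One then checks $\gamma'(M)=0$, whereas every $G_\delta$ containing $M$ still captures positive-$\gamma'$ remnants of uncountably many fibers, so $M$ lies in no $\gamma'$-null $G_\delta$. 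I expect the main obstacle to be exactly this last construction: producing an \emph{honest} subadditive capacity -- not merely a larger $\sigma$-ideal -- that is absolutely continuous with respect to $\gamma$, null on the Mazurkiewicz set, yet regularity-defeating on it, while simultaneously keeping $M$ of class precisely $G_{\delta\sigma}$. Controlling the Borel complexity of $M$ throughout the tree recursion and reconciling the two opposite demands (i) and (ii) placed on a single set is the delicate technical core; the reduction steps surrounding it become routine once the Section~\ref{thin} results are available.
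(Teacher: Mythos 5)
Your overall strategy matches the paper's: reduce to the $\sigma$-ideal $I_\gamma=\{K\in K(X):\gamma(K)=0\}$ (which, as you note, is a $G_\delta$ in $K(X)$, hence coanalytic), invoke the Kechris--Louveau--Woodin dichotomy to get a continuous map $\Phi:2^{\mathbb N}\to K(X)$ with pairwise disjoint values of positive capacity, and run the Mazurkiewicz construction over the induced fiber map, exactly as in Lemma \ref{main lemma} and Theorem \ref{thin ideals}. But two of your steps diverge from what actually works, and one of them you leave unsolved. First, the set $M$ is not an assembly of ``fibral pieces carved out by non-atomicity'': it is a \emph{partial selector}, meeting each fiber $\Phi(t)$ in at most one point, and the Mazurkiewicz property says that every $G_\delta$-set containing $M$ contains an \emph{entire} fiber $\Phi(t)$. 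This is not a category argument over the parameter; it comes from parametrizing \emph{all} increasing sequences of compacta $(F_1\subseteq F_2\subseteq\dots)$ by the Cantor set and selecting, via Baire class 1 selectors, a point of $M$ inside any prospective $\sigma$-compact complement. Your ``meets a comeager-in-$z$ set of fibers'' mechanism is both different and insufficient: merely meeting many fibers gives no lower bound on $\gamma(G\setminus M)$; what is needed is containment of a whole fiber, after which (i) follows because $M\cap\Phi(t)$ is at most a singleton, singletons are $\gamma$-null by non-atomicity, and subadditivity gives $\gamma(\Phi(t)\setminus M)>0$. That is the only place non-atomicity enters --- not to carve positive-capacity pieces out of fibers, which would in fact ruin $\gamma'(M)=0$ in (ii).

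Second, the construction of $\gamma'$, which you explicitly flag as the unresolved ``delicate technical core,'' is where the paper's content lies, and it is far simpler than a continuous reweighting: one sets $\gamma'(A)=\sup_{t}\gamma(A\cap\Phi(t))$. Monotonicity, subadditivity, continuity from below, $\gamma'\ll\gamma$, and $\gamma'(M)=0$ (again by the selector property plus non-atomicity) are immediate, while any $G_\delta$-set $G\supseteq M$ contains some $\Phi(t)$ and hence satisfies $\gamma'(G)\geq\gamma(\Phi(t))>0$. The one genuinely nontrivial verification is the capacity axiom on decreasing sequences of compacta, which reduces to $\sup_t\inf_n\gamma(K_n\cap\Phi(t))\geq\inf_n\sup_t\gamma(K_n\cap\Phi(t))$; the paper proves it by choosing $t_n$ with $\gamma(K_n\cap\Phi(t_n))\geq b$ for an arbitrary $b$ below the right-hand side, passing to a convergent subsequence $t_n\to t'$, and using lower semicontinuity of $\gamma$ (separating $\bigcap_n K_n$ and $\Phi(t')$ by open sets) together with continuity of $\Phi$ to conclude $\gamma\bigl(\bigcap_nK_n\cap\Phi(t')\bigr)\geq b$. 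Without the supremum formula and this compactness argument your proposal does not close; the surrounding reduction steps in it are sound.
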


Secondly, we shall consider the space $P(\QQ)$ of probability measures on the rationals, equipped with the weak topology. We say that a subset $A$ of $P(\QQ)$ is 
{\sl \s-uniformly tight} if it covered by countably many uniformly tight sets, cf. \cite{Bo}, \cite{Bo-2}. If $\mu\in P(\QQ)$, then by 
${\rm supp} (\mu)$ we denote the support of $\mu$, i.e., the closure in $\QQ$ of the set $\{q\in \QQ: \mu(\{q\})>0 \}$.

We shall refine the celebrated theorem of David Preiss \cite{P} (cf. \cite[Theorem 4.8.6]{Bo-2}) that the space $P(\QQ)$ 
contains a compact, non-uniformly tight set, to the following effect.

\begin{theorem}\label{main}
There exists a compact nonempty set $K$ in $P(\QQ)$ such that 

\begin{enumerate}
	
	\item[(i)] ${\rm supp} (\mu)$ is locally compact for $\mu\in K$, and $\supp(\mu)\cap \supp(\nu)$ is finite for distinct $\mu,\nu\in K$,
	
	\item[(ii)] any nonempty open set $V$ in $K$ contains a compact set $L$ such that, whenever $A\sub K$ is \s-uniformly tight, $L\setminus A$ contains a non-uniformly tight compact set.
	
\end{enumerate}
\end{theorem}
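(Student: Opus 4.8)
The plan is to combine the Mazurkiewicz construction of Section~\ref{Mazurkiewicz} with the abstract theory of thin $\sigma$-ideals of Section~\ref{thin}, viewing $\sigma$-uniform tightness on $K$ through a single capacity.

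\emph{Step 1: the set $K$.} First I would realize $K$ as a homeomorphic copy of the Cantor set $\cantor$ inside $P(\QQ)$, refining Preiss's example by the Mazurkiewicz scheme. To each $x\in\cantor$ the construction attaches a locally compact set $S_x\sub\QQ$ and a probability measure $\mu_x$ carried by $S_x$, arranged so that $x\mapsto\mu_x$ is an embedding, the supports form an almost disjoint family (so $\supp(\mu_x)\cap\supp(\mu_y)$ is finite for $x\ne y$), and, as in Preiss's argument, the masses escape to infinity in a branch-dependent way, so that no single compact $C\sub\QQ$ captures $1-\varepsilon$ of the mass of $\mu_x$ uniformly in $x$. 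Weak continuity of $x\mapsto\mu_x$ is compatible with global non-uniform tightness precisely because $\QQ$ fails to be locally compact, so the relevant direction of Prokhorov's theorem does not apply. This yields (i) and makes $K$ itself non-uniformly tight; the Mazurkiewicz branching is what will let me realize this non-tightness in every open piece simultaneously.

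\emph{Step 2: the tightness-defect capacity.} On the compactum $K$ I would introduce the set function $\gamma(L)=\inf_{C}\sup_{\mu\in L}\mu(\QQ\setminus C)$, the infimum taken over compact $C\sub\QQ$, and check that it is a non-atomic subadditive capacity whose null compact sets are exactly the uniformly tight ones. Consequently the $\sigma$-ideal $\mathcal{J}$ of compact $\gamma$-null sets consists of the compact $\sigma$-uniformly tight subsets of $K$, and I would verify the coanalytic definability of $\mathcal{J}$ in the hyperspace of compact subsets of $K$ that Section~\ref{thin} requires. The support structure of Step 1 supplies the crucial largeness: a compact subset of $\QQ$ can capture the bulk of only countably many of the almost disjoint measures $\mu_x$, so every perfect piece of $K$ carries positive $\gamma$; hence $\gamma$ is not thin and has no $\sigma$-finite part on any open subset of $K$.

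\emph{Step 3: from the abstract theory to (ii).} Reformulated in this language, (ii) amounts to the assertion that $\gamma$ is robustly non-$\sigma$-null everywhere: for every nonempty open $V\sub K$ there is a compact $L\sub V$ with $\gamma(L)>0$ such that, for every $\gamma$-$\sigma$-null set $A$ (equivalently, every $\sigma$-uniformly tight $A\sub K$), the set $L\setminus A$ still contains a compact set of positive $\gamma$ (equivalently, a non-uniformly tight compact set). Fixing $V$, I would apply the results of Section~\ref{thin} to $\gamma$ restricted to a clopen piece of $K$ inside $V$: non-thinness together with the absence of a $\sigma$-finite part produces such an $L$, the passage to the smaller capacity $\gamma'\ll\gamma$ of Theorem~\ref{thin capacities}(ii) being matched here by the subtraction of the $\sigma$-uniformly tight set $A$. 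Upper semicontinuity of $L\mapsto\gamma(L)$ then converts the positivity $\gamma(L\setminus A)>0$ into an actual non-uniformly tight compact subset of $L\setminus A$.

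\emph{Main obstacle.} The hardest point is the robustness in Step 3 for an \emph{arbitrary}, possibly non-Borel, $\sigma$-uniformly tight $A$: writing $A\sub\bigcup_n A_n$ with each $A_n$ uniformly tight, the support dictionary makes each $A_n\cap K$ correspond to a small subset of $\cantor$, so one must diagonalize through the Mazurkiewicz tree to carve out a perfect, positive-$\gamma$ compact set avoiding all the $A_n$ at once, uniformly over the uncountably many possible choices of $A$. This is exactly the robust non-thinness phenomenon that the Mazurkiewicz construction and the $\sigma$-ideal machinery of Section~\ref{thin} are designed to deliver; the remaining care is to build $K$ in Step 1 so that $\gamma$ satisfies the coanalytic definability hypotheses under which those results apply.
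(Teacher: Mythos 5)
Your Step 1 is essentially the paper's construction (the embedding $t\mapsto\lambda_t$ over the special Mazurkiewicz selector, transferred to $P(\QQ)$ by a perfect map as in Lemma \ref{reduction}), and it does yield (i). The genuine gap is in Steps 2--3. First, the tightness-defect functional $\gamma(L)=\inf_{C}\sup_{\mu\in L}\mu(\QQ\setminus C)$ is \emph{not} a capacity in the sense of Section \ref{thin}: axiom (2), $\gamma(\bigcup_n A_n)=\sup_n\gamma(A_n)$ for increasing $A_n$, fails. Indeed, take $\mu_1,\mu_2,\ldots$ dense in $K$ and $A_n=\{\mu_1,\ldots,\mu_n\}$; each $A_n$ is finite, hence uniformly tight, so $\gamma(A_n)=0$, while $\bigcup_n A_n$ is not uniformly tight (its closure is the non-uniformly tight set $K$, and the closure of a uniformly tight set is uniformly tight). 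Equivalently, the $\gamma$-null Borel sets --- the uniformly tight Borel sets --- do not form a $\sigma$-ideal: the very distinction between uniform and $\sigma$-uniform tightness is the failure of countable additivity here. So Theorems \ref{thin capacities} and \ref{thin ideals} simply do not apply to your $\gamma$; the same problem undermines your final appeal to capacitability/upper semicontinuity to extract a compact non-uniformly tight subset of the (possibly non-Borel) set $L\setminus A$.

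Second, and more fundamentally, even if one replaces $\gamma$ by the genuine $\sigma$-ideal $I_{ut}(K)$ of compact uniformly tight subsets of $K$ (a $\sigma$-ideal of compact sets by Proposition \ref{sigma_ideal}), the machinery of Section \ref{thin} cannot deliver (ii): the conclusions of Lemma \ref{main lemma} and of the ``moreover'' part of Theorem \ref{thin ideals} concern a pathological $G_{\delta\sigma}$-set with no good $G_\delta$-hull, and say nothing of the form ``for every $\sigma$-uniformly tight $A$, $L\setminus A$ contains a compactum not in $I$''. That subtraction statement is a calibration-type property, and the paper explicitly records in Section \ref{Comments} that it is \emph{unknown} whether $I_{ut}(K)$ is calibrated; so your inference ``non-thinness together with the absence of a $\sigma$-finite part produces such an $L$'' asserts exactly what must be constructed, and your ``diagonalize through the Mazurkiewicz tree'' names the missing argument without supplying it. In the paper this work is done by Lemma \ref{4.1}(2), which depends on the special parametrization $h:\cantor\w\C$ and the sets $T(C)$ of Section \ref{special}: given compact uniformly tight $A_i=\Lambda(T_i)$, one builds compacta $H_i\sub\cantor\times\cantor$ with $\lambda_t(H_i)>1-\delta/2^{i+1}$ for all $t$ and $H_i\cap M\cap\pi^{-1}(T_i)=\emptyset$, so that $B=C\cap\bigcap_i H_i\in\D$ and, by Lemma \ref{T(C)}, $T(B)\sub T(C)\setminus\bigcup_i T_i$. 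Finally, your ``crucial largeness'' claim --- that almost disjointness of the supports forces a compact subset of $\QQ$ to capture the bulk of only countably many $\mu_x$ --- is unjustified and is not what drives the paper: non-tightness comes from the Mazurkiewicz selector hitting every compact set projecting onto $\cantor$ (Davies' argument), while almost disjointness of supports enters only in the proof of (i), via Lemma \ref{reduction}(ii); relatedly, it is not established (nor needed) that \emph{every} perfect piece of $K$ is non-uniformly tight, only that every open piece contains a member of the special family $\M$.
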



Using a reasoning from \cite{PZ}, we shall also show that the set $K$ in Theorem \ref{main} has the following property (a weaker version of the ``1-1 or constant property", introduced by Sabok and Zapletal \cite{SZ}): any Borel function $f: K\w [0,1]$ is either constant or injective on a Borel non-\s-uniformly tight set in $K$.

\ms

Proofs of Theorems \ref{thin ideals} and \ref{main} will be given in Sections \ref{thin} and \ref{P(Q)}, respectively, and the result concerning Borel maps on $P(\QQ)$, stated above, will be addressed in Section \ref{1-1}. 

In comments, gathered in Section \ref{Comments}, we shall provide some information, and natural questions, concerning the \s-ideal generated by compact uniformly tight sets in $P(\QQ)$. 

\bs


\section{Mazurkiewicz's sets}\label{Mazurkiewicz}

\subsection{The Mazurkiewicz construction}\label{construction}

The following construction appeared in \cite[Sections 6, 7 and 8]{M}, cf. Remark \ref{remark}(B). 

Let $f:X\w Y$ be a continuous surjection of the compactum $X$ onto an uncountable compactum $Y$, and let $T$ be a copy of the Cantor set in $Y$.

 Let
\begin{enumerate}
	\item[(1)] $\F=\{(F_1, F_2,\ldots)\in K(X)^{\NN}:
	F_1\sub F_2\sub\ldots  \} .$
\end{enumerate}

 Since $\F$ is compact, as a closed subspace of the product $K(X)^{\NN}$, there is a continuous surjection
\begin{enumerate}
	\item[(2)] $t\mapsto (F_1(t), F_2(t),\ldots)$ from $T$ onto $\F$.
	
\end{enumerate}





Let us notice that the mapping $t\mapsto F_k(t)\cap f^{-1}(t)$ from $T$ into $K(X)$
 is upper semi-continuous. 
 It follows that sets
\begin{enumerate}
	\item[(3)] $D_0=\emptyset$ and $D_k=\{t\in T: F_k(t)\cap f^{-1}(t)\neq\emptyset\}$ for $k=1, 2, \ldots$
\end{enumerate}
are compact. 

Let 
\begin{enumerate}
	\item[(4)] $D=\bigcup_{k=0}^\infty D_k$. 
\end{enumerate}

Let us notice that $D_0\sub D_1\sub\ldots$, cf. (1), and 
$D_1\neq \emptyset$, as for $t$ such that $X=F_1(t)=F_2(t)=\ldots$ we have $t\in D_1$.

Mazurkiewicz proved in Section 6 of \cite{M} a selection theorem which provided, for each  $k\geq 1$, a  Baire class 1 function $\varphi_k: D_k\w X$ such that, cf. \ref{remark}(A),

\begin{enumerate}
	\item[(5)] 	
$\varphi_k(t)\in F_k(t)\cap f^{-1}(t)$ for $t\in D_k$. 
\end{enumerate}

Mazurkiewicz's set $M$ is finally defined  as follows, cf. \cite[Section 8]{M}:
\begin{enumerate}
	\item[(6)] 
	$M=\bigcup_{k\geq 1} \varphi_k(D_k\setminus D_{k-1}).$
\end{enumerate}


The  set $M$ has the following property, where by a partial selector for $f:X\w Y$ we understand a subset of  $X$ intersecting every fiber of $f$ in at most one point:

\begin{enumerate}
	\item[(M)] {\sl $M$ is a $G_{\delta\sigma}$-set in $X$ which is a partial selector for $f$ and each $G_\delta$-set in $X$ containing $M$ contains also some fiber $f^{-1}(y)$.}
\end{enumerate}


To see this, first let us note that $\varphi_k(D_k\setminus D_{k-1})$ is a $G_\delta$-set in $X$ for each $k\geq 1$. Indeed, if $G_k$ is the graph of  $\varphi_k$, then $G_k$ is a $G_\delta$-set as  the graph of a Baire class 1 function and, moreover, cf. (5),   
$$\varphi_k(D_k\setminus D_{k-1})=\{x\in X: (f(x),x)\in G_k \} \setminus f^{-1}(D_{k-1}).$$ Consequently,  cf. (6), $M$ is a $G_{\delta\sigma}$-set in $X$.

	Next, aiming at a contradiction, assume that $H$ is a $G_\delta$-set in $X$ containing $M$ but no fiber of $f$. Then $X\setminus H= F_1\cup F_2 \cup\ldots$ for some $(F_1, F_2,\ldots)\in \F$, cf. (1),
	 such that $f(\bigcup_k F_k)=Y$. It follows that letting $t\in T$ be such that $F_k=F_k(t)$ for $k=1,2,\ldots$, cf. (2), we have that $t\in D$, cf. (4).  
	 Let $k$ be such that $t\in D_k\setminus D_{k-1}$. Then $\varphi_k(t)\in M\setminus H$, cf. (5), which is impossible.

\subsubsection{Remark}\label{remark}

(A) The selection theorem, established in \cite[Section 6]{M}, providing a Baire class 1 selector for any upper semi-continuous mapping defined on a metric space and taking closed non-empty subsets of a Polish space as values, was rediscovered in \cite{Bou} and became a standard tool in the descriptive set theory, cf. \cite[Theorem XIV.4]{ku-mo}.

\smallskip

(B) To be more accurate, constructing his set in \cite{M}, Mazurkiewicz considered as $X$ the closed unit ball in $\RR^n$ and the function $f:X\w [0,1]$ assigning to  each $x\in X$ its distance from the origin. The property $(M)$ was used by Mazurkiewicz to establish that the set $M$ has dimension $n$ (cf. also \cite{Pol}).

 \subsection{Special Mazurkiewicz sets}\label{special}

To get Theorem \ref{main}, we shall need a
special adjustment
 of the Mazurkiewicz construction. Before giving the details, let us make some introductory remarks, adopting the notation from the preceding section. The set
\begin{enumerate}
	\item[] 
$\F_0=\{(F_1, F_2,\ldots)\in \F:
	F_1= F_2=\ldots\hbox{ and } f(F_1)=Y  \}$
\end{enumerate}
is compact, and so is the set
\begin{enumerate}
		\item[] 
	$T_0=\{t\in T: (F_1(t), F_2(t),\ldots)\in \F_0\}$.
\end{enumerate}
Moreover, cf. (3), $T_0\sub D_1$ and hence $M\cap f^{-1}(T_0)=\varphi_1(T_0)$ is a $G_\delta$-partial selector of $f$, hitting each compactum in $X$ which is mapped by $f$ onto $Y$.

This part of the Mazurkiewicz set was rediscovered by Michael \cite{Mi} (with a similar justification), while investigating compact-covering mappings, and it was used by Davies \cite{D} to provide a striking example concerning uniform tightness of collections of measures (Davies overlooked in \cite{D} the Michael's paper and gave a direct construction of such sets in a special case, cf. also \cite{D2}).

A key element of our proof of Theorem \ref{main} will be a refinement of the Davies example, based on the following special instance of the Mazurkiewicz construction.

Let $\pi:\cantor\times\cantor\w \cantor$ be the projection onto the first axis, and let
\begin{enumerate}
	\item[(6)] 
	$\C=\{C\in K(\cantor\times\cantor): \pi(C)=\cantor\}$
\end{enumerate}
(this set can be identified with $\F_0$, where $X=\cantor\times\cantor$ and $f=\pi$).

Since $\C$ is a compact zero-dimensional set without isolated points, it is a Cantor set, and parametrizing $\C$ on $\cantor$, we can demand that 
\begin{enumerate}
	\item[(7)] 
	$h:\cantor\w \C$ is a homeomorphism.
\end{enumerate}

Then we let, cf. \cite[Proof of Lemme 5]{M},
\begin{enumerate}
	\item[(8)] 
	$\s(t)=\min(h(t)\cap \pi^{-1}(t))$ and $M=\s(\cantor)$,
\end{enumerate}
where the minimum is taken with respect to 
the lexicographical ordering on $\cantor$ (cf. \cite[2D]{k}).

Let us notice that $\s$ is a Baire class 1 function with the property that $(\pi\circ \sigma)(t)=t$ for $t\in \cantor$. Consequently,  $M$ is a $G_\delta$-set (cf. the argument following assertion (M) in Section \ref{construction}).

We define
\begin{enumerate}
	\item[(9)] 
	$T(C)=\{t\in\cantor: h(t)\sub C \} $, for $C\in \C$.
	
\end{enumerate}

Since $\{F\in \C: F\sub C \}$ is compact, so is $T(C)$.

\begin{lemma}\label{T(C)}\hfill\null
\begin{enumerate}
	\item[(A)] For each $C\in \C$, $T(C)\sub \pi(M\cap C)$ and 
	$T(C')\sub T(C)$, whenever $C'\sub C$ belongs to $\C$.
	
	\item[(B)] For each nonempty open set $V$ in $\cantor$, there is $C\in\C$ such that $C$ is a finite union of closed-and-open rectangles in $\cantor\times\cantor$ and $T(C)\sub V$.
		   
\end{enumerate}

\end{lemma}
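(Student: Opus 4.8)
\emph{Part (A)} is immediate. The monotonicity $T(C')\sub T(C)$ for $C'\sub C$ follows at once from the definition (9): if $h(t)\sub C'$ then $h(t)\sub C$. For the inclusion $T(C)\sub\pi(M\cap C)$, recall from (8) that for every $t$ the set $h(t)\cap\pi^{-1}(t)$ is nonempty (since $h(t)\in\C$ projects onto $\cantor$), so $\sigma(t)$ is a well-defined point of $h(t)\cap\pi^{-1}(t)$ lying in $M$, with $\pi(\sigma(t))=t$. Hence if $t\in T(C)$, i.e. $h(t)\sub C$, then $\sigma(t)\in M\cap C$ and $t=\pi(\sigma(t))\in\pi(M\cap C)$.

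For \emph{Part (B)}, the plan is first to rephrase the conclusion. Since $h$ is a homeomorphism, $T(C)=h^{-1}(\{D\in\C:D\sub C\})$, so $T(C)\sub V$ is equivalent to $\{D\in\C:D\sub C\}\sub h(V)$, where $h(V)$ is a nonempty open subset of $\C$. Thus I must produce a surjective finite union of clopen rectangles $C$ such that \emph{every} $D\in\C$ lying below $C$ is carried by $h^{-1}$ into $V$. The difficulty is exactly this downward quantifier: enlarging $C$ so as to keep it surjective tends to admit many small $D\sub C$, some of which could fall outside $h(V)$. Controlling all sub-elements of $C$ simultaneously is the main obstacle, and the device that overcomes it is to work near a \emph{minimal} element of $\C$.

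Call $\Gamma_\gamma=\{(x,\gamma(x)):x\in\cantor\}$ a clopen graph when $\gamma:\cantor\w\cantor$ is locally constant. Each such $\Gamma_\gamma$ belongs to $\C$ and is minimal in $\C$: the only $D\in\C$ with $D\sub\Gamma_\gamma$ is $\Gamma_\gamma$ itself, since a proper compact subset of a graph omits a whole fibre and hence fails to be surjective. A short pattern-matching argument shows that the clopen graphs are dense in $\C$: given $D\in\C$ and the basic Vietoris neighbourhood of $D$ determined by the finitely many level-$n$ rectangles that $D$ meets, one splits each length-$n$ column into finitely many cylinders and lets $\gamma$ take, on each, a value in the corresponding rectangle, so that the locally constant $\gamma$ has a graph meeting exactly the same rectangles. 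Consequently the open set $h(V)$ contains some clopen graph $\Gamma=\Gamma_\gamma$; fix it, with $t_0=h^{-1}(\Gamma)\in V$.

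Finally I would thicken $\Gamma$. Let $C_k=\{(x,y)\in\cantor\times\cantor: y$ and $\gamma(x)$ agree on the first $k$ coordinates$\}$; since $\gamma$ is locally constant, $C_k$ is a surjective finite union of clopen rectangles, $C_{k+1}\sub C_k$, and $\bigcap_k C_k=\Gamma$. The sets $Z_k=\{D\in\C:D\sub C_k\}$ are closed in the compact space $\C$ (the relation $D\sub F$ is Vietoris-closed for closed $F$), they decrease with $k$, and $\bigcap_k Z_k=\{D\in\C:D\sub\Gamma\}=\{\Gamma\}$ by minimality of $\Gamma$. Since $\Gamma\in h(V)$ and $h(V)$ is open, the decreasing compact sets $Z_k\cap(\C\sm h(V))$ have empty intersection, so $Z_k\sub h(V)$ for some $k$. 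For that $k$ the set $C=C_k$ is a finite union of clopen rectangles belonging to $\C$, and $T(C)=h^{-1}(Z_k)\sub V$, as required. The two places where genuine work is needed are the density of clopen graphs and this concluding compactness step; the rest is bookkeeping.
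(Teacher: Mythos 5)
Your proof is correct, and part (B) takes a genuinely different route from the paper's. Part (A) coincides with the paper's argument (the paper likewise just observes $\sigma(t)\in M\cap C$ and applies $\pi$, leaving the monotonicity implicit). For part (B), the paper works directly inside a single basic Vietoris neighbourhood of a point of $h(V)$: it chooses clopen sets $U_1,\dots,U_m$ such that any $F\in\C$ meeting every $U_i$ and contained in $\bigcup_i U_i$ lies in $h(V)$, arranges the $U_i$ to be rectangles whose projections are pairwise identical or disjoint, and then splits the shared projections so that the resulting rectangles $W_{i_j}$ have pairwise disjoint projections with union $\cantor$; for $C=\bigcup_{i,j} W_{i_j}$ every surjective compact $F\sub C$ is then \emph{forced} to meet each rectangle, hence lands in the basic neighbourhood. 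You instead exploit the order structure of $\C$: graphs of locally constant functions are its minimal elements, they are dense (your column-splitting argument is sound), and once a minimal $\Gamma\in h(V)$ is fixed, the decreasing thickenings $C_k\downarrow\Gamma$ together with compactness of $\C$ and closedness of $Z_k=\{D\in\C: D\sub C_k\}$ give $Z_k\sub h(V)$ for large $k$; all of these steps check out, including $\bigcap_k Z_k=\{\Gamma\}$ via minimality. It is worth noticing that the two constructions produce witnesses of the same shape --- finite unions of clopen rectangles with pairwise disjoint projections covering the first factor, i.e.\ thickened graphs --- so the paper's disjointification trick can be read as building your $C_k$ in one explicit step, with no compactness argument needed. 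What your version buys is conceptual transparency (it explains \emph{why} such $C$ exists: dense minimal points approximated from above by clopen thickenings), a reusable density lemma, and the small bonus that $T(C_k)$ decreases to the singleton $\{t_0\}$, so $T(C)$ is automatically nonempty and can be confined to an arbitrarily small neighbourhood of a prescribed point of $V$; the paper's version buys brevity and an entirely finite, constructive verification.
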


\begin{proof}
	(A) 
	If $t \in T(C)$, then $\s(t)\in M\cap C$, cf. (8) and (9), and hence $t=\pi(\s(t))\in \pi(M\cap C)$.
	
	
	\smallskip
	
	(B) Since $h$ is a homeomorphism onto $\C$, $h(V)$ is open in $\C$, and hence there are closed-and-open sets $U_1,\ldots,U_m$ in $\cantor\times\cantor$ such that whenever $F\in\C$ intersects all $U_i$ and $F\sub\bigcup_{i=1}^m U_i$, then $F\in h(V)$. 
	
	Moreover, one can assume that $U_i$ are closed-and-open rectangles in  $\cantor\times\cantor$ and the projections $\pi(U_i)$, $\pi(U_j)$ are either identical or disjoint.
	
	Let $\S$ be the collection of projections $\pi(U_i)$, $i=1,\ldots,m$. Let us fix $S\in\S$ and let $U_{i_1},\ldots,U_{i_k}$ be the rectangles $U_i$ with $\pi(U_i)=S$. Let us split $S$ into pairwise disjoint closed-and-open sets $S_1,\ldots,S_k$ and let us replace each rectangle  $U_{i_j}$ by the rectangle $W_{i_j}=U_{i_j}\cap (S_j\times \cantor)$, $j\leq k$.
	
	Let $C=\bigcup_i\bigcup_j W_{i_j}$. Since $h(V)$ is nonempty, we have $\bigcup\S=\cantor$. But $\pi(C)=\bigcup\S$, so consequently $C\in\C$, cf. (6). Let $F\in\C$ and $F\sub C$. Since the projections $\pi(W_{i_j})$ are pairwise disjoint, $F$ intersects each $W_{i_j}\sub U_i$. Also, $F\sub\bigcup_{i=1}^m U_i$, and hence $F\in h(V)$. Since $h$ was injective, we infer that $T(C)\sub V$, cf. (9).  
	
	\end{proof}

\section{On thin \s-ideals of compact sets}\label{thin}

Most of our notation and terminology in this section follow  \cite{k-l-w}.

Given a subset $E$ of a compactum $X$
we denote by $Bor(E)$ 
the \s-algebra of Borel sets in $E$.

A collection $I\sub K(X)$ is  {\sl hereditary} 
if it is closed under taking compact subsets of its elements. If  $I$ is hereditary and, moreover, closed under compact countable unions of elements of $I$,
then $I$ is {\sl a \s-ideal of compact sets in $X$}.

A {\sl \s-ideal 
	on $X$} is a collection $J\sub Bor(X)$, closed under taking Borel subsets and countable unions of elements of $J$.   We always assume that $X\notin J$.

Let $I$ be a 
\s-ideal of compact sets in a compactum $X$.

We let
$\tilde{I}_{Bor(X)}= \{B\in Bor(X): K(B)\sub I \}$.
The collection $\tilde{I}_{Bor(X)}$ 
has {\sl the inner approximation property}, namely every Borel set not in $\tilde{I}_{Bor(X)}$ contains a compact subset not in $\tilde{I}_{Bor(X)}$. Conversely, if  $J$ is a \s-ideal on $X$ with the inner approximation property and we let $I=J\cap K(X)$, then $I$ is a \s-ideal of compact sets in  $X$ and  $J=\tilde{I}_{Bor(X)}$.


We say that $I$ is {\sl thin} if there is no uncountable disjoint family  of compact subsets of $X$ which are not in $I$, cf. \cite{k-l-w}. If $\tilde{I}_{Bor(X)}$ is a \s-ideal on $X$, then $I$ is thin if and only if $\tilde{I}_{Bor(X)}$ satisfies c.c.c. 

It is well-known that if  a coanalytic
\s-ideal $I$ of compact sets in a compactum $X$ is not thin, then there is a Cantor set of pairwise disjoint  compact sets not in $I$ (cf. \cite[Section 3.1, Theorem 2]{k-l-w}). Combining this fact with properties of the Mazurkiewicz set we obtain the following observation.

\begin{lemma}\label{main lemma}
	Let $I$ be a coanalytic
	\s-ideal of compact sets in a compactum $X$. If $I$ is not thin, then there is a continuous map $\Phi:\cantor\w K(X)$ and a $G_{\delta\sigma}$-set $M\sub \bigcup \{\Phi(t): t\in\cantor \}$ such that 
	
	\begin{enumerate}
		
		\item[(i)] the compact sets $\Phi(t)$ are pairwise disjoint and   not in $I$,
		
		\item[(ii)] $|M\cap \Phi(t)|\leq 1$ for each $t$,
		
		\item[(iii)] for any $G_\delta$-set $G$ in $X$ containing $M$ there is $t$ with $\Phi(t)\sub G$.
		
	\end{enumerate}
	
\end{lemma}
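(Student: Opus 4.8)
The plan is to manufacture, out of a Cantor family of pairwise disjoint compact sets not in $I$, a single continuous surjection to which the Mazurkiewicz construction of Section~\ref{construction} applies, and then to read off (i)--(iii) directly from property~(M).

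First I would invoke \cite[Section 3.1, Theorem 2]{k-l-w}: as $I$ is coanalytic and not thin, there is a Cantor set of pairwise disjoint compact sets not in $I$, that is, a continuous injection $\Phi:\cantor\w K(X)$ with the compact sets $\Phi(t)$ pairwise disjoint and $\Phi(t)\notin I$ for all $t$. This is already condition~(i). I would then consider the graph
$$Z=\{(t,x)\in\cantor\times X: x\in\Phi(t)\}.$$
Since $\Phi$ is continuous into $K(X)$, the compact-valued map $t\mapsto\Phi(t)$ is upper semi-continuous, so its graph $Z$ is closed in $\cantor\times X$ and hence compact (the same mechanism by which the sets $D_k$ of (3) were seen to be compact).

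Next I would recast $Z$ as Mazurkiewicz data $f:X'\w Y$. Let $\rho:Z\w X$ be the projection to the second coordinate; because the $\Phi(t)$ are pairwise disjoint, $\rho$ is injective, hence a homeomorphism of the compactum $Z$ onto the compact set $X'=\bigcup\{\Phi(t):t\in\cantor\}\sub X$. Pushing the first projection $p:Z\w\cantor$ forward along $\rho^{-1}$ gives a continuous surjection $f=p\circ\rho^{-1}:X'\w\cantor$ whose fibers are precisely $f^{-1}(t)=\Phi(t)$ (these are nonempty since $\emptyset\in I$). Thus $f$ maps the compactum $X'$ onto the uncountable compactum $Y=\cantor$, in which $T=\cantor$ serves as the required Cantor set, and every fiber of $f$ lies outside $I$.

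Finally I would apply the construction of Section~\ref{construction} to $f$ and let $M$ be the resulting Mazurkiewicz set. By~(M), $M$ is a $G_{\delta\sigma}$-set in $X'$, a partial selector for $f$, and every $G_\delta$-set in $X'$ containing $M$ contains some fiber $f^{-1}(t)=\Phi(t)$. Since $M$ is a partial selector and $f^{-1}(t)=\Phi(t)$, we get $|M\cap\Phi(t)|\le 1$, which is~(ii). For~(iii), given a $G_\delta$-set $G$ in $X$ with $M\sub G$, the trace $G\cap X'$ is a $G_\delta$-set in $X'$ containing $M$, so~(M) yields $t$ with $\Phi(t)\sub G\cap X'\sub G$. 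The only delicate point, which I expect to be the main obstacle, is the bookkeeping needed to move these $G_\delta$- and $G_{\delta\sigma}$-conditions between the subspace $X'$ and the ambient space $X$; here I would use that $X'$, being compact, is closed and hence a $G_\delta$ in the metric space $X$, so that a $G_\delta$ (respectively $G_{\delta\sigma}$) subset of $X'$ remains $G_\delta$ (respectively $G_{\delta\sigma}$) in $X$. In particular $M$ is a $G_{\delta\sigma}$-set in $X$ contained in $\bigcup\{\Phi(t):t\in\cantor\}$, completing the verification.
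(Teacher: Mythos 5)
Your proposal is correct and follows essentially the same route as the paper: both invoke the Kechris--Louveau--Woodin theorem to get the Cantor family $\Phi$, build a continuous surjection $f$ from the compactum $\bigcup\{\Phi(t): t\in\cantor\}$ onto $\cantor$ whose fibers are the sets $\Phi(t)$, and then apply the Mazurkiewicz construction, reading off (ii) and (iii) from property (M). The only differences are cosmetic: the paper verifies continuity of $f$ directly (via the map $s$ sending $x$ to the unique member of $\{\Phi(t): t\in\cantor\}$ containing it, noting that preimages of compact sets are compact), whereas you use the closed graph $Z$ and the projection homeomorphism, and you make explicit the transfer of $G_\delta$- and $G_{\delta\sigma}$-conditions from the closed subspace $\bigcup\{\Phi(t): t\in\cantor\}$ to $X$, a point the paper leaves implicit under ``clearly.''
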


\begin{proof}
	As recalled above, there is a continuous map $\Phi:\cantor\w K(X)\setminus I$ such that the sets  
	$\Phi(t)$ are pairwise disjoint. 
	
	Let $\K=\{\Phi(t): t\in\cantor \}$, $K=\bigcup \K$ and 
	let $s:K\w \K$ associate to each $x\in K$ the unique $L_x\in\K$ such that $x\in L_x$.
	
	Clearly, both $\K$ and $K$ are compact in the respective spaces and 
	the mapping $s$ is continuous, as for each compact set $\A$ in $\K$, $s^{-1}(\A)=\bigcup\A$ is compact in $K$.
	
	
	It follows that the surjection $f=\Phi^{-1}\circ s:K\w \cantor$, associating to each $x\in K$ the unique $t\in\cantor$ such that $x\in \Phi(t)$,  is also continuous. 
	
	Finally, let $M$ be the Mazurkiewicz set for $f$ (cf. assertion (M) in Section \ref{Mazurkiewicz}). Clearly, $M$ satisfies conditions (ii) and (iii).   
	
\end{proof}

We shall obtain Theorems \ref{thin measures} and \ref{thin capacities} by specifying (to the $\sigma$-ideals of compact null sets with respect to measures and capacities) the following general theorem concerning arbitrary coanalytic \s-ideals of compact sets. It complements earlier results of Kechris, Louveau and Woodin (cf. \cite[Section 3.1, Theorem 7]{k-l-w}), concerning the relationship between thinnes of \s-ideals of compact sets and their regularity properties.

\begin{theorem}\label{thin ideals}
	Let $I$ be a coanalytic
	\s-ideal of compact sets in a compactum $X$. If 
	$\tilde{I}_{Bor(X)}$ is a \s-ideal on $X$ containing all singletons, then the following are equivalent:
	
	\begin{enumerate}
		
		\item[(a)] $I$ is thin,
		
		\item[(b)] If $J\supseteq \tilde{I}_{Bor(X)}$ is any \s-ideal, then every set in $J$ is contained in a $G_\delta$-set in $J$.
		
	\end{enumerate}
	
	Moreover, if $I$ is not thin, then there is a  $G_{\delta\sigma}$-set $M$ in $X$ such that:
	\begin{enumerate}
		\item[(i)]  $G \setminus M\in \tilde{I}_{Bor(X)}$ for no $G_\delta$-set $G$ in $X$ containing $M$,
		\item[(ii)]  there is a \s-ideal  $I'\supseteq \tilde{I}_{Bor(X)}$ on $X$  such that  $M\in I'$ but $M$ is not contained in any $G_\delta$-set from $I'$. 
		
	\end{enumerate}
	 
\end{theorem}








\begin{proof}
	Assume first that $I$ is not thin. Let  a continuous map $\Phi:\cantor\w K(X)$ and a $G_{\delta\sigma}$-set $M\sub \bigcup \{\Phi(t): t\in\cantor \}$ satisfy assertions of Lemma \ref{main lemma}. 
	
	Condition (i) is clearly satisfied.
	
	To obtain condition (ii) let
	\begin{enumerate}
		\item[(1)]  $I'=\{A\in Bor(X):\ A\cap \Phi(t)\in \tilde{I}_{Bor(X)} \hbox{\ for each\ } t\in\cantor    \}$.
	\end{enumerate}	
	
	Clearly, $I'$ is a \s-ideal on $X$, $\tilde{I}_{Bor(X)}\sub I'$ and $M\in I'$ 
	(each $M\cap \Phi^{-1}(t)$ having at most one element).
	 Moreover, for any $G_\delta$-set $G$ in $X$ containing $M$ there is $t$ with $\Phi(t)\sub G$ (see Lemma \ref{main lemma}) and hence $G\notin I'$. 
	
	We have proved that $(b) \Ra (a)$ and the ``moreover" part of the assertion.
	
	\smallskip
	
	Assume now that $I$ is thin and let $J\supseteq \tilde{I}_{Bor(X)}$ be a \s-ideal on $X$.  Since the \s-ideal 
	$\tilde{I}_{Bor(X)}$ is c.c.c., so is $J$. 
	
	We claim that $J$ has the inner approximation property.
	
	Indeed, $\tilde{I}_{Bor(X)}$ being c.c.c.,	letting $C= X\setminus \bigcup {\mathcal R}$, where $\mathcal R$
	is a maximal disjoint family of Borel sets from $J \setminus \tilde{I}_{Bor(X)}$, we get  a Borel set 
	$C$ such that  
	
	\begin{enumerate}
		\item[(2)]	$J=\{B\in Bor(X): B\cap C \in\tilde{I}_{Bor(X)}\}$.
	\end{enumerate}

	Now, if $B\notin J$, then $B\cap C \notin \tilde{I}_{Bor(X)}$, so by the inner approximation property
	of $\tilde{I}_{Bor(X)}$, there is a compact set $K\sub B\cap C$ not in $\tilde{I}_{Bor(X)}$ and hence also not in $J$ (cf. (2)).
	
	Finally, it is enough to note that the inner approximation property plus c.c.c. implies that every set  $B\in J$ is contained in a $G_\delta$-set $G$ in $J$. To see this, let us just consider a maximal family $\A$ of pairwise disjoint and disjoint from $B$ compact sets not in $J$ and let $G=X\setminus \bigcup\A$.
	
	We have thus proved  that $(a) \Ra (b)$, completing the proof of the theorem.

\end{proof}

Natural examples of \s-ideals with the inner approximation property on a compactum $X$ are the \s-ideals of Borel null sets with respect to semifinite Borel measures or capacities on $X$. 

Let us recall that that a Borel measure $\mu$ on  $X$ is  {\sl semifinite} if each Borel set of positive $\mu$-measure contains a Borel set of finite positive $\mu$-measure (\s-finite Borel measures and Hausdorff measures on Euclidean cubes are semifinite, cf. \cite{r}). If $\mu$ is such a measure, then we let ${I}_\mu=\{K\in K(X): \mu(K)=0\}$ and
$\tilde{I}_\mu=\{B\in Bor(X): \mu(B)=0\}$. This \s-ideal is c.c.c. if and only if the measure $\mu$ is \s-finite. The inner approximation property of $\tilde{I}_\mu$ follows from the inner regularity of finite Borel measures on Polish spaces (see \cite[Theorem 17.11]{k}). 

By a {\sl capacity on $X$} we mean here a  function 
$\gamma:\Pot(X)\w [0,+\infty)$ such that (cf. \cite[Section 3.1]{k-l-w}) 
\begin{enumerate}
	\item $\gamma(\emptyset)=0$ and $A\sub B$ implies $\gamma(A)\leq \gamma(B)$,
	\item $\gamma(\bigcup_n A_n)=\sup_n \gamma(A_n)$, if $A_0\sub A_1\sub A_2\sub\ldots$,
	 \item $\gamma(\bigcap_n K_n)=\inf_n \gamma(K_n)$, if $K_0\supseteq K_1\supseteq K_2\supseteq\ldots$ are compact sets.
\end{enumerate}

If  a capacity $\gamma$ on $X$ is a {\sl subadditive} (i.e., $\gamma(A\cup B)\leq \gamma(A)+\gamma(B)$, whenever $A,\ B\sub X$, cf. \cite[Section 3.1]{k-l-w}),  then the collection ${I}_\gamma=\{K\in K(X): \gamma(K)=0\}$ is a \s-ideal of compact sets and $\tilde{I}_\gamma=\{B\in Bor(X): \gamma(B)=0\}$ is a \s-ideal on $X$.  If this \s-ideal  is c.c.c., then we say that $\gamma$ is {\sl thin}. The inner approximation property of $\tilde{I}_\gamma$ follows from
the fundamental Choquet capacitability theorem (see \cite[Section 3.1]{k-l-w}). 

For \s-ideals of compact sets of the form ${I}_\mu$ and ${I}_\gamma$ 
the ``moreover" part of Theorem \ref{thin ideals} 
is specified by Theorems \ref{thin measures} and \ref{thin capacities} which we are now ready to prove.

\begin{proof}[Proof of Theorems \ref{thin measures} and \ref{thin capacities}]
	We shall closely follow the first part of the proof of Theorem \ref{thin ideals} letting $I={I}_\mu$ ($I={I}_\gamma$, respectively; let us note that in this case $I$  is always a $G_\delta$-set in $K(X)$, see \cite[Section 3.1]{k-l-w}) in which case we have $\tilde{I}_{Bor(X)}=	\tilde{I}_\mu$ 
	($\tilde{I}_{Bor(X)}=\tilde{I}_\gamma$, respectively). 
	
	In both cases it is enough to show that the \s-ideal $I'$ (cf. (1)) is of the form $\tilde{I}_{\mu'}$ ($\tilde{I}_{\gamma'}$, respectively) for a certain 
	semifinite Borel measure $\mu'$ (subadditive capacity $\gamma'$, respectively). We achieve this by defining $\mu'$ and $\gamma'$ by the formulas:
	
	$$\begin{matrix}
	
	\mu'(A) & = & \sum_t \mu(A\cap \Phi(t)) &\quad\hbox{for}\quad A\in Bor(X),\\
	
	\gamma'(A) & = & \sup_t\gamma(A\cap \Phi(t)) &\quad\hbox{for}\quad A\sub X.
\end{matrix}
$$

Checking all the required properties of $\mu'$ and $\gamma'$ is straightforward, except perhaps for property (3) of $\gamma'$ which boils down to 
\begin{enumerate}
	\item[(4)]	$\sup_t\inf_n\gamma(K_n\cap \Phi(t))\geq \inf_n\sup_t\gamma(K_n\cap \Phi(t))$, if $K_0\supseteq K_1\supseteq K_2\supseteq\ldots$ are compact sets.
\end{enumerate}	

In order to prove it, let $a= \inf_n\sup_t\gamma(K_n\cap \Phi(t))$ ($a\in [0,+\infty)$) and let us fix an arbitrary real number $b<a$. Then for each $n$ we may choose $t_n\in\cantor$ such that $\gamma(K_n\cap \Phi(t_n))\geq b$ and the sequence $(t_n)_n$ is convergent in $\cantor$ to $t'$.
Let $K=\bigcap_n K_n$.

 We claim that $\gamma(K\cap \Phi(t'))\geq b$; it then follows that $\gamma(K_n\cap \Phi(t'))\geq b$ for each $n$, which since $b<a$ was arbitrary, completes the proof of (4).
 
 To justify the claim, suppose towards a contradiction that   $\gamma(K\cap \Phi(t')) < b$. The capacity $\gamma$ being lower semi-continuous (see \cite[Section 3.1]{k-l-w}), there is an open set $U$ in $X$ such that $K\cap \Phi(t')\sub U$ and $\gamma(U)<b$. There are also open sets $V_1$, $V_2$ such that $K\sub V_1$, $\Phi(t')\sub V_2$ and $V_1\cap V_2 \sub U$. Consequently, there is $n$ with $K_n\sub V_1$ and  $\Phi(t_n)\sub V_2$ from which it follows that 
$\gamma(K_n\cap \Phi(t_n)) < b$, contradicting the choice of $t_n$.
\end{proof}


\begin{remark}
It is well known that if a capacity $\gamma$ is {\sl strongly subadditive} (i.e., $\gamma(A\cup B)\leq \gamma(A)+\gamma(B)-\gamma(A\cap B)$ for $A,\ B\sub X$), then $\gamma(A)=\inf\{\gamma(U): A\sub U,\ U \hbox{ open}\}$ for all sets $A\sub X$, cf.   \cite[Th\' eor\` eme 15]{Del}. Dellacherie \cite[Appendice I, 4]{Del} with the help of the Davies' construction (cf. Section \ref{Davies_example}) gave an example of a subadditive capacity $\gamma$ on $[0,1]\times [0,1]$ and a   $\gamma$-null $G_{\delta}$-set $M$ such that $\gamma(U)=1$ for any open set $U\supseteq M$.
If in Dellacherie's example we instead use the Mazurkiewicz $G_{\delta\sigma}$-set $M$ for the projection onto the first axis $\pi:[0,1]\times [0,1]\w [0,1]$ (cf. assertion (M) in Section \ref{Mazurkiewicz}), then we still have $\gamma(M)=0$ but $\gamma(G)=1$ for any $G_\delta$-set $G\supseteq M$.
We were unable to find in the literature any examples of a subadditive capacity $\gamma$ satisfying this assertion. 
\end{remark}

\section{Uniformly tight compacta in $P(\QQ)$}\label{P(Q)}

Given a separable metrizable space $E$, we shall denote by $P(E)$ the space of probability Borel measures on $X$, equipped with the weak topology (see \cite[17.E]{k}). 

If $E$ is a  Borel set in a compactum $X$, then every measure $\mu\in P(E)$ is {\sl tight}, i.e., for every $\varepsilon>0$ there is a compact set $K\sub E$ such that $\mu(X\setminus K)<\varepsilon$ ( see \cite[Theorem 17.11]{k}).

A set $\M\sub P(E)$ is {\sl uniformly tight}, if for every $\varepsilon>0$ there is a compact set $K\sub E$ such that $\mu(X\setminus K)<\varepsilon$ for all $\mu\in\M$, cf. \cite[Definition 8.6.1]{Bo}.

A set $\M\sub P(E)$ is {\sl \s-uniformly tight}, if it is  a countable union of uniformly tight sets.

\subsection{A refinement of the Davies example}\label{Davies_example}

Let $\lambda$ be the countable product of the measure on $\{0,1\}$ assigning $\frac{1}{2}$ to each singleton and let, for $t\in\cantor$, $\lambda_t=\delta_t\otimes \lambda$ be the product of the Dirac measure at $t$ and $\lambda$, on the product  $\cantor\times\cantor$.

Davies \cite{D} considered the \s-compact set 
\begin{enumerate}
\item[(1)] $E=(\cantor\times\cantor)\setminus M,$
\end{enumerate}
where $M$ is a set described at the beginning of Section \ref{special}, i.e., $M$ is a $G_\delta$-selector for the projection $\pi:\cantor\times\cantor\w \cantor$, hitting every compact set in $\cantor\times\cantor$ projecting onto $\cantor$.

Since $\lambda_t(M)=0$ for all $t\in \cantor$, one can consider $\lambda_t$ as an element of the space $P(E)$ and 
\begin{enumerate}
\item[(2)] $\Lambda:\cantor\w P(E)$ given by $\Lambda(t)=\lambda_t$ for $t\in\cantor$,
\end{enumerate}
is a homeomorphic embedding, cf. \cite[Section 8]{T}.  

Davies pointed out that the compact set $\Lambda(\cantor)$ is not uniformly tight, as for any compact set $A$ in $E$, if $t\not\in\pi(A)$, then $\lambda_t(A)=0$, cf. also \cite{T}.

Picking a special set $M$ described in (7) and (8) of Section \ref{special}, we shall get some additional properties of this example. We shall use the notation introduced in Section \ref{special}.

Let $\C$ be the collection described in Section \ref{special}, (6), and let, cf. Section \ref{special}, (9),

\begin{enumerate}
\item[(3)] $\D=\{C\in\C: \inf\{\lambda_t(C): t\in \cantor\}>0\}$,
\item[(4)] $\M=\{\Lambda(T(C)): C\in\D  \}$.
\end{enumerate}

We shall check that the collection $\M$ of compact sets in $\Lambda(\cantor)$ has the following properties.

\begin{lemma}\label{4.1}
\begin{enumerate}
	\item Each nonempty open set in $\Lambda(\cantor)$ contains some element of $\M$.
	
	\item Whenever $A\in\M$ and $A_1, A_2,\ldots$ are uniformly tight sets in $P(E)$, there is an element of $\M$ contained in $A\setminus (A_1\cup A_2\cup\ldots)$.
\end{enumerate}
\end{lemma}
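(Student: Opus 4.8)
The plan is to analyze the family $\M=\{\Lambda(T(C)): C\in\D\}$ using the two structural facts already established: Lemma \ref{T(C)}(B), which lets me shrink the ``base'' set $T(C)$ into any prescribed open set by choosing $C$ to be a finite union of closed-and-open rectangles, and the definition of $\D$, which guarantees $\inf_t\lambda_t(C)>0$ so that $C$ is ``large'' in each fiber. The key heuristic is that $\Lambda$ is a homeomorphic embedding, so open sets in $\Lambda(\cantor)$ correspond to open sets $V\sub\cantor$, and membership questions about $\M$ translate into questions about the sets $T(C)\sub\cantor$.

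For part (1), I would start with a nonempty open set in $\Lambda(\cantor)$, pull it back via the embedding $\Lambda$ to a nonempty open set $V\sub\cantor$, and apply Lemma \ref{T(C)}(B) to obtain $C\in\C$ that is a finite union of closed-and-open rectangles with $T(C)\sub V$. The remaining point is to check that such a rectangular $C$ actually lies in $\D$, i.e., that $\inf_t\lambda_t(C)>0$: because $C$ is a finite union of closed-and-open rectangles and projects onto all of $\cantor$, for each $t$ the fiber $C_t=\{y:(t,y)\in C\}$ is a nonempty closed-and-open set, and $\lambda_t(C)=\lambda(C_t)$. Since there are only finitely many distinct fibers (the rectangles have clopen projections partitioning $\cantor$), the infimum over $t$ is a minimum of finitely many positive numbers, hence positive. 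Thus $C\in\D$ and $\Lambda(T(C))\sub\Lambda(V)$ is the desired element of $\M$ in the given open set.

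For part (2), given $A=\Lambda(T(C_0))\in\M$ with $C_0\in\D$ and uniformly tight sets $A_1,A_2,\ldots$, the strategy is to find $C\in\D$ with $C\sub C_0$ and $\Lambda(T(C))\sub A\setminus\bigcup_n A_n$; the inclusion $T(C)\sub T(C_0)$ is automatic from Lemma \ref{T(C)}(A) once $C\sub C_0$. The core of the argument is to shrink $C_0$ inside each fiber to avoid the measures captured by the uniformly tight families, while keeping the fiber-wise measure bounded below. Each $A_n$ being uniformly tight yields a compact set $K_n\sub E$ with $\lambda_t(K_n)$ uniformly small for measures in $A_n$; since $E=(\cantor\times\cantor)\sm M$ and $K_n\sub E$ is compact, $K_n$ misses $M$ and in particular $\pi(K_n)\neq\cantor$ would be too strong, so instead I would use that $\lambda_t(A\text{-relevant }K_n)$ is controlled to carve out, fiberwise, a clopen subset $C\sub C_0$ of still-positive infimal measure whose base $T(C)$ avoids all the ``bad'' parameters $t$ for which $\lambda_t\in\bigcup_n A_n$. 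The main obstacle, and the step I expect to be most delicate, is precisely this simultaneous control over countably many uniformly tight families: I must remove, for every $n$, the $t$-values at which $\Lambda(t)$ lands in $A_n$, while still retaining enough of $C_0$ in each surviving fiber to guarantee $\inf_t\lambda_t(C)>0$ and hence $C\in\D$. This should follow by exploiting that each $A_n$, being uniformly tight, can only ``see'' a measure $\lambda_t$ concentrated on a compact subset of $E$, and the selector property of $M$ (each $\lambda_t(M)=0$ but every compact $K\sub E$ projecting onto $\cantor$ meets $M$) forces $\lambda_t\notin A_n$ for $t$ outside $\pi(K_n)$; combining these across $n$ via a diagonal/compactness argument in $\cantor$ should produce the required $C$.
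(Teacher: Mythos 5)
Your part (1) is correct and is essentially the paper's own argument; spelling out that the rectangular $C$ supplied by Lemma \ref{T(C)}(B) belongs to $\D$ (only finitely many distinct fibers, each clopen and nonempty, so $\inf_t\lambda_t(C)$ is a minimum of finitely many positive numbers) is a detail the paper leaves implicit, and your verification of it is right. Part (2), however, is only a statement of intent at its decisive point, and the concrete plan you sketch --- carve out, fiberwise, a \emph{clopen} $C\sub C_0$ whose base $T(C)$ avoids the bad parameters $\{t:\lambda_t\in\bigcup_n A_n\}$ --- cannot work. If $C\in\C$ is clopen, then $\{F:F\sub C\}$ is clopen in the Vietoris topology, so $T(C)$ is clopen, and it is nonempty since $h^{-1}(C)\in T(C)$; but the bad set can be dense: take $A_n=\{\lambda_{t_n}\}$ with $(t_n)$ dense in $\cantor$ (singletons are uniformly tight), and then any admissible base $T(C)$ must have empty interior. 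For the same reason one cannot directly avoid $\bigcup_n\pi(K_n)$: countably many proper compact subsets can cover $\cantor$. So the ``diagonal/compactness argument'' you defer to is exactly where the proof lives, and your outline points in a direction that this simple example already blocks.

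The missing idea is the \emph{indirect} excision through the selector, via Lemma \ref{T(C)}(A): since $T(B)\sub\pi(M\cap B)$, to expel a parameter $t$ from $T(B)$ it suffices to make $B$ miss $M$ over $t$. The paper first replaces each $A_i$ by a compact set $\Lambda(T_i)$ (closures of uniformly tight sets are uniformly tight, and only $A_i\cap\Lambda(\cantor)$ matters), takes tightness witnesses $F_i\sub E\cap\pi^{-1}(T_i)$ with $\lambda_t(F_i)>1-\delta/2^{i+1}$ for $t\in T_i$, where $\delta=\inf_t\lambda_t(C_0)$, and then --- the step absent from your sketch --- \emph{extends} $F_i$ to a compactum $H_i\sub\cantor\times\cantor$ with $\lambda_t(H_i)>1-\delta/2^{i+1}$ for \emph{every} $t\in\cantor$ while keeping $H_i\cap M\cap\pi^{-1}(T_i)=\emptyset$ (an inductive construction with shrinking clopen rectangles around $F_i$). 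Putting $H=\bigcap_i H_i$ and $B=H\cap C_0$ gives $\lambda_t(B)>\delta/2$ for all $t$, hence $B\in\D$, and since $B$ misses $M$ over $\bigcup_i T_i$, Lemma \ref{T(C)}(A) yields $T(B)\sub T(C_0)\setminus\bigcup_i T_i$, so $\Lambda(T(B))\in\M$ lies in $A\setminus\bigcup_i A_i$; note this $B$ is genuinely not clopen, consistent with the obstruction above. Your remark that the selector property forces the bad parameters into $\pi(K_n)$ only \emph{locates} them; it does not remove them from the base of any element of $\M$, which is the actual content of part (2). (Two further slips: uniform tightness makes $\lambda_t(K_n)$ uniformly \emph{close to} $1$ on $A_n$, not small; and ``every compact $K\sub E$ projecting onto $\cantor$ meets $M$'' is garbled --- the correct statement is that every compactum in $\cantor\times\cantor$ projecting onto $\cantor$ meets $M$, hence no compact subset of $E$ projects onto $\cantor$.)
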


\begin{proof}
(i) Let $G$ be a nonempty open set in  $\Lambda(\cantor)$, and let $V=\Lambda^{-1}(G)$.

By Lemma \ref{T(C)}(B), thee exists $C\in \D$, cf. (3), such that $T(C)\sub V$, and hence $\Lambda(T(C))$ is an element of $\M$ contained in $G$, cf. (4).

\smallskip

(ii) Let $A\in\M$ and let, cf. (3), (4), for a certain $C\in\D$,
\begin{enumerate}
	
	\item[(5)] $\M=\Lambda(T(C))$ and $\inf\{\lambda_t(C): t\in\cantor\}=\delta>0$.
\end{enumerate}

Let $A_i\sub P(E)$ be uniformly tight.
Let us recall that if $S \sub P(E)$ is uniformly tight, then so is its closure in $P(E)$ (indeed, if $K_1 \sub K_2 \sub\ldots$ are compact sets in $E$ such that $m(K_i) \geq 1- \frac{1}{i}$ for $m \in S$, then the intersection od the closed sets $\{m \in P(E): m(K_i) \geq 1- \frac{1}{i} \}$ is a closed uniformly tight set in $P(E)$ containing $S$). 
Therefore, we can assume that $A_i$ are compact, i.e.,
\begin{enumerate}
	
	\item[(6)] $A_i=\Lambda(T_i)$ for some compact $T_i\sub\cantor$.
\end{enumerate}

Uniform compactness of $A_i$ provides a compact set 
$F_i\sub E\cap \pi^{-1}(T_i)$ such that $\lambda_t(F_i)>
1-\frac{\delta}{2^{i+1}}$, for $t\in T_i$, and one can extend $F_i$ to a compact set $H_i$ in $\cantor\times\cantor$ such that 
\begin{enumerate}
	
	\item[(7)] $H_i\cap (M\cap \pi^{-1}(T_i))=\emptyset$ and $\lambda_t(H_i)>
	1-\frac{\delta}{2^{i+1}}$, for $t\in \cantor$.
\end{enumerate}

Indeed, let us fix $F_i$ and let $U_1\supseteq U_2\ldots$ be sets open in $\cantor\times\cantor$ such that $F_i=\bigcap_n U_n$. Inductively, we pick finite unions of closed-and-open rectangles $W_i$, $W_0=\cantor\times\cantor$, such that $F_i\sub W_n\sub U_n\cap W_{n-1}$ and $\lambda_t(W_n)>1-\frac{\delta}{2^{i+1}}$, for $t\in \pi(W_n)$. Then, with $S_n=\pi(W_n)\setminus \pi(W_{n+1})$, the set $H_i=F_i\cup \bigcup_{n=0}^\infty(W_n\cap \pi^{-1}(S_n))$ has required properties.

Now, let
\begin{enumerate}
	
	\item[(8)] $H=\bigcap_{i=1}^{\infty} H_i$.
\end{enumerate}

Then $H$ is a compact set in $\cantor\times\cantor$, 
\begin{enumerate}
	
	\item[(9)] $H\cap(M\cap\pi^{-1}(\bigcup_{i=1}^{\infty} T_i))$ and
	$\lambda_t(H)>
	1-\frac{\delta}{2}$, for $t\in \cantor$.
\end{enumerate}

By (5), (9) and (3),
\begin{enumerate}
	
	\item[(10)] $B=H\cap C\in\D$.
\end{enumerate}

From Lemma \ref{T(C)}(A), $T(B)\sub T(C)\setminus \bigcup_{i=1}^{\infty} T_i$, and in effect, by (10), (5) and (6), $\Lambda(T(B))$ is an element of $\M$ contained in $A\setminus \bigcup_{i=1}^{\infty} A_i$.
\end{proof}

\subsection{Transferring the Davies example into $P(\QQ)$: a proof of Theorem\ref{main}}\label{proof_of_main}

Let $f:E\w F$ be a perfect map from a separable metrizable space $E$ onto a closed subspace of a separable metrizable space $F$. The map $f$ gives rise to a perfect map $P(f): P(E)\w P(F)$ defined by $P(f)(\mu)= \mu\circ f^{-1}$, such that $P(f)(A)$ (respectively,  $P(f)^{-1}(B)$) is uniformly tight, whenever   $A$ (respectively, $B$) is uniformly tight, cf. \cite[Theorem 8.10.30]{Bo}.

Let $E=(\cantor\times\cantor)\setminus M$ be the Davies' example discussed in Section \ref{Davies_example}. Saint Raymond \cite{SR} defined a perfect map $f:E\w \QQ$ (cf. also \cite{Ju} and \cite{Ost}) and concluded that $P(\QQ)$ contains a compact non-uniformly tight set, thus providing a proof of the Preiss theorem, based on different ideas than the original one. We shall use this approach to prove Theorem \ref{main}. More precisely, we shall use the special set $M$, discussed in Section \ref{special}, and we shall appeal to the following observation.

\begin{lemma}\label{reduction}
Let $G$ be a $G_\delta$-set in $\cantor$. There is a continuous map $p:\cantor\w\cantor$such that
\begin{enumerate}
	\item[(i)] $p$ embeds $G$ homeomorphically into $\cantor\setminus\QQ$ and maps $\cantor\setminus G$ into $\QQ$,
	
	\item[(ii)] for any disjoint compact sets $A, B$ in $\cantor$, $p(A)\cap p(B)$ is finite.
\end{enumerate}
\end{lemma}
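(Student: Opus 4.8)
The plan is to produce $p$ as a continuous map whose non-injectivity is confined to the complement of $G$ and is controlled metrically. Fix once and for all the copy $\QQ=\{x\in\cantor: x_n=0\text{ for all large }n\}$ of the rationals inside $\cantor$, so that $\cantor\setminus\QQ$ consists of the sequences with infinitely many $1$'s; since any two countable dense-in-itself subsets of $\cantor$ are interchanged by a self-homeomorphism, it suffices to build $p$ for this $\QQ$ and then compose with such a homeomorphism. I first reduce (ii) to a cleaner statement: it is enough that $p$ be injective on $G$ with $p(G)\sub\cantor\setminus\QQ$, $p(\cantor\setminus G)\sub\QQ$, and that \emph{for every $\delta>0$ only finitely many fibers $p^{-1}(q)$, $q\in\QQ$, have diameter $>\delta$.} Indeed, if $A,B$ are disjoint compacta and $y\in p(A)\cap p(B)$, then $y$ cannot be irrational (two points of $G$ with equal image would coincide, by injectivity, contradicting $A\cap B=\emptyset$), so $y=p(a)=p(b)\in\QQ$ with $a\in A$, $b\in B$; thus $p^{-1}(y)$ meets both $A$ and $B$ and has diameter $\ge\mathrm{dist}(A,B)>0$, and only finitely many fibers are that large.

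Write $G=\bigcap_n V_n$ with $V_0=\cantor\supseteq V_1\supseteq V_2\supseteq\cdots$ open. I would build a Luzin-type scheme $(C_s)_{s\in\NN^{<\NN}}$ of clopen subsets of $\cantor$ as follows: put $C_\emptyset=\cantor$, and given $C_s$ let $\{C_{s\frown i}:i\in\NN\}$ be a decomposition of the open set $V_{|s|+1}\cap C_s$ into pairwise disjoint clopen cylinders of diameter $\le 2^{-|s|-1}$, refined so that $\mathrm{diam}\,C_{s\frown i}\to0$ as $i\to\infty$. Then the children are disjoint, contained in $C_s$, cover $G\cap C_s$, and leave the \emph{fall-off set} $C_s\sm\bigcup_i C_{s\frown i}=C_s\cap(\cantor\setminus V_{|s|+1})$. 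Consequently a point of $G$ follows a unique infinite branch, while a point $x\notin G$ leaves at the finite stage $e(x)-1$, where $e(x)=\min\{n:x\notin V_n\}$, landing in a fall-off set; moreover $G=\bigcap_n\bigcup_{|s|=n}C_s$. The decisive metric feature, proved by induction on the level, is that \emph{for each $\delta>0$ only finitely many cells $C_s$ have diameter $>\delta$}: such cells have level $<\log_2(1/\delta)$, and each node, having children of diameter tending to $0$, has only finitely many children of diameter $>\delta$.

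Now fix a prefix-preserving coding $\langle\cdot\rangle:\NN^{<\NN}\to 2^{<\NN}$ of the special shape $\langle s\frown i\rangle=\langle s\rangle\,0^{\,i}\,1\cdots$, so that $\langle s\frown i\rangle\to\langle s\rangle\,0^\infty$ as $i\to\infty$ and every infinite branch acquires infinitely many $1$'s. Define $p(x)=\langle\alpha\rangle\in\cantor\setminus\QQ$ when $x\in G$ follows the branch $\alpha$, and $p(x)=\langle s(x)\rangle\,0^\infty\in\QQ$ when $x$ falls off at the node $s(x)$. That $p$ is injective on $G$, embeds it (the shrinking diameters give a continuous inverse), and sends $\cantor\setminus G$ into $\QQ$ is then clear, and distinct fall-off nodes give distinct rationals. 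The fibers over the rationals are exactly the fall-off sets, of diameter $\le 2^{-|s|}$, so the finiteness feature of the scheme gives the fiber condition of the first paragraph, hence (ii).

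The step I expect to be the main obstacle is \emph{continuity}, specifically at a complement point $x\in\overline G\setminus G$, which is a limit of points of $G$ whose branches grow without bound; there $p(x)$ must be a single rational even though nearby points are sent to irrationals. This is exactly what the special coding handles: since every child $C_{s\frown i}$ is clopen, hence closed, a fall-off point $x$ (lying in $C_{s}$ but in no child) cannot be a limit of any single child, so any sequence $x_n\to x$ entering children must enter $C_{s\frown i_n}$ with $i_n\to\infty$; by the shape of the code the images $p(x_n)$ then begin with $\langle s\rangle\,0^{\,i_n}$ and therefore converge to $p(x)=\langle s\rangle\,0^\infty$. Continuity at points of $G$, and at fall-off points approached only by fall-off points, follows directly from $\mathrm{diam}\,C_{\alpha|k}\to0$. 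Finally, composing with the self-homeomorphism of $\cantor$ carrying this $\QQ$ onto the prescribed copy of the rationals yields the map in the statement.
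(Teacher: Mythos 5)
Your proof is correct, but it packages the construction differently from the paper, and the comparison is instructive. The paper starts from the same raw data---$G=\bigcap_n H_n$ with $H_n$ open decreasing, and a splitting of each $H_n$ into pairwise disjoint clopen pieces $V_{n,1},V_{n,2},\ldots$ with $\mathrm{diam}\,V_{n,i}\le \frac{1}{n}$ and $\mathrm{diam}\,V_{n,i}\to 0$ as $i\to\infty$---but instead of nesting these into a Luzin scheme it takes, for each $n$ \emph{separately}, the collapse map $p_n:\cantor\to\cantor$ sending $V_{n,i}$ to the point $e_i$ (the sequence with a single $1$ at place $i$) and $\cantor\setminus H_n$ to $e_0$, and defines $p=(p_1,p_2,\ldots)$ as the diagonal map into $\cantor\times\cantor\times\ldots$, identified with $\cantor$, with $\QQ$ again the finite-support points. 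This dissolves what you rightly single out as the main obstacle: each $p_n$ is continuous by exactly the mechanism you isolate (the pieces are closed and $e_i\to e_0$, so the indices of pieces visited by a sequence converging to a point outside $H_n$ must tend to $\infty$), but the check happens one coordinate at a time, with no prefix coding and no fall-off analysis. Property (ii) is likewise obtained coordinatewise rather than through fibers: for each $n$ only finitely many $V_{n,i}$ meet both $A$ and $B$, and none does once $\frac{1}{n}<\mathrm{dist}(A,B)$, so $p(A)\cap p(B)$ sits inside a product all of whose projections are finite and almost all of which are $\{e_0\}$, hence is finite. Your scheme-plus-coding route costs you the continuity argument at fall-off points and the injectivity of the node coding (both of which you handle correctly), and in exchange yields a sharper structural picture---the fibers over rationals are exactly the fall-off sets, with quantitative diameter control---that the diagonal map does not directly exhibit. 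Two small points: your opening reduction needs the prescribed copy of $\QQ$ to be countable and \emph{dense in $\cantor$} (so that countable dense homogeneity of $\cantor$ applies), not merely dense-in-itself as you write; this is harmless here, since the paper itself identifies $\QQ$ with the finite-support points of $\cantor$. Also, with the standard metric the refinement ``diameters tending to $0$'' comes for free, as any disjoint family of cylinders contains only finitely many members of diameter exceeding a given $\delta$.
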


\begin{proof}
We shall use an idea similar to that in \cite[proof of Lemma]{E-P}.

Let us fix a metric on $\cantor$, and let 
\begin{enumerate}
	\item[(11)] $G=H_1\cap H_2\cap\ldots$, where $H_1\supseteq H_2\supseteq\ldots$ are open in $\cantor$.
\end{enumerate}

Let us split each $H_n$ into pairwise-disjoint closed-and-open sets $V_{n,1}, V_{n,2},\ldots$ such that
\begin{enumerate}
	\item[(12)] $H_n=\bigcup_i V_{n,i}$, $\hbox{diam} V_{n,i}\leq \frac{1}{n}$ and $\lim_i \hbox{diam} V_{n,i} = 0 $.
\end{enumerate}

Let $e_0$ be the zero sequence in $\cantor$ and let $e_i$ have exactly one non-zero coordinate, at the $i$'th place.

The function $p_n:\cantor\w\cantor$ sending $\cantor\setminus H_n$ to $e_0$ and $V_{n,i}$ to $e_i$, is continuous, and let
\begin{enumerate}
	\item[(13)] $p=(p_1, p_2,\ldots):\cantor\w \cantor\times\cantor\times\ldots$ 
\end{enumerate}
be the diagonal map. Fixing a bijection between $\NN\times\NN$ and $\NN$, we shall identify $\cantor\times\cantor\times\ldots$ with $\cantor$.

Clearly, $p$ satisfies (i).

To check (ii), let us consider disjoint compact sets $A,\ B$ in $\cantor$ and let $\delta>0$ be the distance between $A$ and $B$. By (12), for a fixed $n$, only finitely many $V_{n,i}$'s intersect both $A$ and $B$, and therefore $p_n (A)\cap p_n (B)$ is finite. Moreover, if $\frac{1}{n}<\delta$, no $V_{n,i}$ intersects both $A$ and $B$, hence $p_n(A)\cap p_n(B)\sub \{e_0\}$.

It follows that, cf (13),  $p(A)\cap p(B)$ is a subset of some product $\cantor\times\dots\cantor\times \{e_0\}\times \{e_0\}\times \ldots$ whose every projection is finite.

\end{proof}	

We are now ready to justify Theorem \ref{main}.

Let $E$ be the space discussed in Section \ref{special}, and let $p:\cantor\times\cantor\w \cantor$ be the map described in Lemma \ref{reduction} for $G=M$. The map
\begin{enumerate}
\item[(14)] $f=p|E:E\w \QQ$ is perfect, 
\end{enumerate}
and let
\begin{enumerate}
\item[(15)] $P(f):P(E)\w P(\QQ)$ be defined by $P(f)(\mu)=\mu\circ f^{-1}$. 
\end{enumerate}

We shall check that the compact set
\begin{enumerate}
\item[(16)] $K=P(f)(\Lambda(\cantor))\sub P(\QQ)$ 
\end{enumerate}
has the properties (i) and (ii) in Theorem \ref{main}.

For any $t\in\cantor$, the support of the measure $P(f)(\lambda_t)$ is the set $f((\{t\}\times\cantor)\setminus M)$, and from (i) and (ii) in Lemma \ref{reduction} we obtain property (i) in Theorem \ref{main}.

Let $\N=\{P(f)(A):\ A\in\M) \}$. Then Lemma \ref{reduction} implies that each non-empty open set in $K$ contains an element of $\N$ and for each $S\in \N$ and uniformly tight sets $S_1, S_2,\ldots$ in $P(\QQ)$, there is an element of $\N$ contained in $S\setminus (S_1\cup S_2\cup\ldots)$. This yields (ii) in Theorem \ref{main}.

\section{Borel mappings on $P(\QQ)$}\label{1-1}


A reasoning in \cite{PZ} can be used to the following effect.

\begin{proposition}\label{constant_or_injective}
	Let $\K$ be a hereditary collection of compact sets in a compactum $K$ such that
	\begin{enumerate}	
	\item[(i)] 	for any non-empty open set $V$ in $K$ there is a compact set $A\sub V$ such that, whenever $A_1, A_2,\ldots\in\K$, there is a compactum not in $\K$, contained in $A\setminus (A_1\cup A_2\cup\ldots)$.
		\item[(ii)] no compact set $A\notin\K$ can be covered by countably many elements of $\K$. 
	\end{enumerate}
		  Then any Borel map $f: K\w [0,1]$ is either constant or injective on a Borel set in $K$ which cannot be covered by countably many elements of $\K$.
\end{proposition}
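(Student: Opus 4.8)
The plan is to split into two cases according to whether some level set of $f$ is ``large'', and to treat the hard (injective) case by a fusion built on the combinatorics of robust sets. Throughout, call a set \emph{small} if it can be covered by countably many members of $\K$, and \emph{positive} otherwise; thus the conclusion asks for a positive Borel set on which $f$ is constant or injective. Call a compact set $A$ \emph{robust} if $A\setminus(A_1\cup A_2\cup\cdots)$ contains a compactum not in $\K$ for every choice of $A_1,A_2,\dots\in\K$. Then (i) says that every nonempty open subset of $K$ contains a robust compactum, and taking all $A_n=\emptyset$ shows a robust set is not in $\K$, hence positive by (ii). I would first record the \emph{dichotomy}: if some fibre $f^{-1}(c)$, $c\in[0,1]$, is positive, then $f$ is constant on the positive Borel set $f^{-1}(c)$ and we are done; so the difficulty is the case in which \emph{every} fibre of $f$ is small.

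The combinatorial engine is a \emph{localisation lemma}: if $A$ is robust and $A=\bigcup_n C_n$ with each $C_n$ compact, then some $C_n$ is robust. To see this, suppose not; for each $n$ fix a witness $(C_{n,j})_j$ in $\K$ such that every compactum inside $C_n\setminus\bigcup_j C_{n,j}$ lies in $\K$, and apply robustness of $A$ to the single countable family $\{C_{n,j}\}_{n,j}$, obtaining a compactum $E\notin\K$ with $E\sub A\setminus\bigcup_{n,j}C_{n,j}$. Each $E\cap C_n$ is compact and contained in $C_n\setminus\bigcup_j C_{n,j}$, hence lies in $\K$, so $E=\bigcup_n(E\cap C_n)$ is small; as $E\notin\K$ this contradicts (ii). Applied to a finite cover of $A$ by sets of small diameter, the same argument shows that robust sets contain robust subsets of arbitrarily small diameter.

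For the injective case I would build a fusion, whose limit is controlled by a \emph{limit lemma}: if $U_0\supseteq U_1\supseteq\cdots$ are compact, $U_0$ is robust, and each difference $U_n\setminus U_{n+1}$ is small, then $Z=\bigcap_n U_n$ is again robust, in particular positive. Indeed, given $A_1,A_2,\dots\in\K$, one covers each small set $U_n\setminus U_{n+1}$ by countably many members of $\K$ and feeds this whole countable family, together with the $A_i$, into the robustness of $U_0$; the resulting non-$\K$ compactum automatically avoids every $U_n\setminus U_{n+1}$, hence lies in $Z$, while avoiding $\bigcup_i A_i$. It therefore suffices to construct a scheme $\{A_s:s\in\cantor^{<\NN}\}$ with $A_{s^\frown n}\sub A_s$, such that $U_k=\bigcup_{|s|=k}A_s$ loses only a small set at each step (\emph{fatness}) and such that $f$ separates the branches; then $Z=\bigcap_k U_k$ is positive and $f|Z$ is injective off a small set that can be deleted using the robustness of $Z$.

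I would run this inside a robust $A_\emptyset$ given by (i), splitting each $A_s$ at a single value $c_s$ of $f$ into two robust pieces extracted, via the localisation lemma, from $A_s\cap f^{-1}[0,c_s)$ and $A_s\cap f^{-1}(c_s,1]$; the value $c_s$ is chosen, using a threshold argument together with the smallness of all fibres, so that both pieces are positive, and the \emph{only} material discarded is the fibre $A_s\cap f^{-1}(c_s)$, which is small -- this is exactly what keeps the scheme fat. With strict inequalities at the cuts, two branches diverging first at $s$ receive $f$-values $<c_s$ and $>c_s$, so they can collide only when both values equal some cut value, and all such collisions lie in the small set $\bigcup_s f^{-1}(c_s)$. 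The \emph{main obstacle} is to reconcile positivity with genuine injectivity: fatness forbids discarding positive mass and so forces the cuts to be made at single values, whereas pinning down one point in each fibre -- producing an honest positive partial $f$-transversal rather than merely a set on which $f$ separates branches -- appears to require a further spatial subdivision that would discard positive mass. I expect to overcome this exactly as in \cite{PZ}, by a continuous reading of names: refine the topology of $A_\emptyset$ to a finer zero-dimensional Polish topology making $f$ continuous, observing that a set compact in the finer topology is still compact, hence still eligible for membership in $\K$, in the original one, and carry out the whole fusion there. In the refined topology the level cuts of $f$ become clopen, so the localisation and limit lemmas apply to honestly compact pieces and deliver a robust $Z$ carrying a positive partial transversal on which $f$ is continuous and injective, the residual failures of injectivity being confined to the small union of cut-fibres whose removal from the robust (hence positive) $Z$ leaves the required positive Borel set.
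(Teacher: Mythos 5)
Your three preparatory lemmas are correct and would survive refereeing: the localisation lemma (a robust compactum decomposed into countably many compacta has a robust piece), the limit lemma (whose proof, as you in effect note, never uses compactness of the $U_n$, only robustness of $U_0$ and smallness of the differences), and the threshold argument — writing $u=\inf\{c: A\cap f^{-1}[0,c] \mbox{ positive}\}$ and $v=\sup\{c: A\cap f^{-1}[c,1] \mbox{ positive}\}$, smallness of the fibre over $u$ rules out $u=v$, so any $c\in(u,v)$ gives two positive sides. The reduction to the case where all fibres are small is also fine. The endgame, however, has genuine gaps which the change-of-topology device does not close. First, after refining to a finer zero-dimensional Polish topology $\tau'$ making $f$ continuous, a $\tau'$-compact set is indeed compact in the original topology, but the converse fails badly: the robust seed $A_\emptyset$ need not be $\sigma$-compact in $\tau'$, so the localisation lemma has nothing to decompose, and no $\tau'$-compact robust set need exist at all. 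Asking that $A_s\cap f^{-1}[0,c_s)$ be a countable union of $\tau'$-compacta amounts to asking that $f$ restricted to $A_s$ be $\sigma$-continuous with compact pieces, which can fail even for Baire class $1$ functions — the paper's Section 6.3, on the strong non-$\sigma$-continuity of Mazurkiewicz functions, is built on precisely this phenomenon. Second, even where the cut can be made, localisation applied to a robust $A_s$ produces a robust piece on \emph{one} side of the cut only; the other side is merely positive by the threshold argument, and under (i)--(ii) positive does not imply robust: no inner approximation is assumed, so a positive Borel set may contain no compactum outside $\K$, and (i) supplies robust sets only inside open subsets of $K$, not inside relatively open pieces of $A_s$. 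The two-sided robust splitting is therefore unjustified.

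The deepest gap is the one you yourself flagged and then deferred to the refinement trick, which does not touch it. If fatness is preserved (only single fibres discarded), the limit $Z$ splits into branch classes whose $f$-values lie in nested intervals; if the interval lengths shrink, $f$ is \emph{constant} on each class, and since all fibres of $f$ are small in this case, every class is small — so you need a positive Borel partial transversal of this decomposition, and robustness of $Z$ does not provide one: the compacta outside $\K$ that robustness supplies may meet each branch class in a large set, and nothing in (i)--(ii) converts them into a positive selector (in the paper's motivating example such selectors exist only by virtue of the special Mazurkiewicz combinatorics of Lemma 4.1). Conversely, shrinking spatial diameters to force singleton classes requires discarding the positive remainders that localisation leaves behind, destroying fatness, after which nothing guarantees the thin Cantor limit is positive. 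The paper resolves this tension by working in the value space instead: it takes a dense $G_\delta$-set $G$ with $f|G$ continuous (no change of topology), follows Section 3 of \cite{PZ} — with the case distinction on the closed fibre-closures, using (i) to produce boundary compacta outside the $\sigma$-ideal $I$ generated by $\K$ and (ii) to keep them positive — to prove that some compact \emph{meager} set $C\sub[0,1]$ has $f^{-1}(C)\notin I$, and then invokes the reasoning of \cite[Theorem 3.2]{PZ-1} to convert this statement into the constant-or-injective dichotomy, so that injectivity is manufactured on the side of $[0,1]$ rather than by a spatial fusion in $K$. Your domain-side fusion is an attractive alternative design, but as it stands the three points above leave it short of a proof.
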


	\begin{proof} Let $I$ be the \s-ideal in $K$ generated by sets in $\K$, i.e., $I$ consists of Borel sets which can be covered by countably many elements of $\K$. Let us note that no open set in $K$ belongs to $I$.
		
	We shall derive the proposition from the following claim.
	
	\begin{claim}
		For any Borel map $f: K\w [0,1]$ there is a compact meager set $C$ in $[0,1]$ with $f^{-1}(C)\not\in I$.
	\end{claim}	
			
	To prove the claim, we shall repeat the reasoning from Section 3 of \cite{PZ}. To keep the notation close to that in \cite{PZ}, we let $X= K$, $Y=[0,1]$, and striving for a contradiction, let us assume that for any meager set $C$ in $Y$, $f^{-1}(C)\in I$.
	
	There exists a $G_\delta$-set $G$ in $X$, dense in $X$ such that $f|G: G\w Y$ is continuous. Since every compact set in $I$ must have empty interior, 
	$V\not\in\I$ for any nonempty relatively open $V$ in $G$. 
	Thus, (1) and (2) in Section 3 of \cite{PZ} are satisfied. 
	
	Let us check that the assertion of Claim 3.1 of \cite{PZ} holds true in our situation. This requires a minor modification of the arguments.
	
	In Case 1, i.e.,  if the set $\widecheck{f}_U(d)$ is not in $I$, then either it is boundary (and then we can take $L=\widecheck{f}_U(d)$) or otherwise, by (i),  it contains a compact set $A\not\in I$ (and then we can just take $L=A$). 
	
	In Case 2, i.e., if $\widecheck{f}_U(d)\in I$ for all $d\in D$, then, $\overline{U}$ having non-empty interior, using (i) we find a  boundary compactum 
	$L\sub \overline{U}\setminus \bigcup_{d\in D}\widecheck{f}_U(d)$, $L\notin \K$ so, consequently, $L\notin I$ by (ii).
	
	The rest of the proof in Section 3 in \cite{PZ} does not require any change, and we reach in this way a contradiction ending the proof of the claim.
	
	\smallskip
	
	Combining the claim with the reasoning leading to \cite[Theorem 3.2]{PZ-1} we get the assertion of Proposition \ref{constant_or_injective}.
	
	 			\end{proof}
	
	
	We would like to apply Proposition \ref{constant_or_injective} to the compactum $K$ defined in Theorem \ref{main} and to the collection $\K$ of compact uniformly tight sets in $K$ to the following effect:
	\begin{itemize}
		\item any Borel map $f: K\w [0,1]$ is either constant or injective on a Borel non-\s-uniformly tight set in $K$. 
	\end{itemize}
		
	 In view of Theorem \ref{main}(ii), it is enough to check that $\K$ satisfies assertion (ii) of Proposition \ref{constant_or_injective}. The latter will be an immediate consequence of a
	  result we are about to prove in a more general setting.
	 
	Given a separable metrizable space $E$, we shall denote by $P_t(E)$ the space of tight probability Borel measures on $E$, equipped with the weak topology; if $E$ is a Borel subset of a separable, completely metrizable space, then $P_t(E)=P(E)$ -- the space of all probability Borel measures on $E$, cf. Section \ref{Davies_example}. 
	 
	The following result extends a theorem of Hoffman-J\o  rgensen \cite{H-J} and Choquet \cite{Ch} that countable compact sets in $P_t(E)$ are uniformly tight (which in turn generalized the classical Le Cam theorem about convergent sequences in $P_t(E)$, cf. \cite[Theorem 8.6.4]{Bo}). 
	Its proof is rather standard but we did not find a handy reference in the literature, so we decided to include a proof for readers' convenience.
	
	\begin{proposition}\label{sigma_ideal}
	Let $L$ be a compact set of tight probability Borel measures on a separable metrizable space $E$. 
	If $L$ is a countable union of compact uniformly tight sets, then $L$  is uniformly tight.
	\end{proposition}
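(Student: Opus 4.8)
The plan is to replace $E$ by a compact ambient space, recast uniform tightness as an outer‑regularity statement, and then run an induction on a Cantor--Bendixson‑type rank attached to the covering $\{L_n\}$, the point being that at the bottom of the rank the \emph{compactness} of $L$ produces genuine finiteness. First I would embed $E$ as a subspace of a compact metric space $Z$ (e.g. the closure of $E$ in the Hilbert cube) and set $B=Z\setminus E$. Since every tight measure is concentrated on a $\sigma$-compact subset of $E$, each $\mu\in L$ extends to a Borel probability measure on $Z$ with $\mu(B)=0$, and $L$ sits as a compact subset of the compact metrizable space $P(Z)$. A compact $C\subseteq E$ is exactly a compact subset of $Z$ disjoint from $B$, and $\mu(E\setminus C)=\mu(Z\setminus C)$; hence $L$ is uniformly tight iff for every $\varepsilon>0$ there is an open $U\supseteq B$ with $\gamma_L(U):=\sup_{\mu\in L}\mu(U)\le\varepsilon$. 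In other words, the goal is the outer regularity at $B$ of the (subadditive, lower semicontinuous) set function $\gamma_L$.

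I may assume $L_1\subseteq L_2\subseteq\cdots$ (replace $L_n$ by $\bigcup_{i\le n}L_i$; a finite union of uniformly tight sets is uniformly tight). The one clean gain from compactness is: if the relative interiors $\operatorname{int}_L(L_n)$ already cover $L$, then finitely many do, so $L\subseteq L_N$ for some $N$ and $L$ is uniformly tight. To globalise this I introduce the derivative $D(L)=L\setminus\bigcup_n\operatorname{int}_L(L_n)$, a closed nowhere‑dense subset of $L$ which is again a countable union of the (relatively) uniformly tight sets $L_n\cap D(L)$; the Baire category theorem gives $D(L)\subsetneq L$ whenever $L\neq\emptyset$. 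Iterating, the strictly decreasing closed sets $L\supseteq D(L)\supseteq D^2(L)\supseteq\cdots$ reach $\emptyset$ at some countable ordinal (a strictly decreasing sequence of closed sets in a second‑countable space has countable length, and a nonempty stable value would contradict Baire), so this furnishes a well‑founded rank on which to induct.

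The induction itself: the rank‑zero case is the finite‑subcover kernel above. For the inductive step $D(L)$ has smaller rank, so by hypothesis it is uniformly tight, while $L\setminus D(L)=\bigcup_n\operatorname{int}_L(L_n)$ is covered by relatively open uniformly tight pieces. The idea is to peel off successive shells around $D(L)$: using uniform tightness of $D(L)$ together with lower semicontinuity of $\mu\mapsto\mu(V)$ for open $V$, one surrounds $D(L)$ by a relatively open $W\supseteq D(L)$; the compact remainder $L\setminus W$ is disjoint from $D(L)$, hence covered by the $\operatorname{int}_L(L_n)$, so by compactness it lies in a single $L_N$ and is uniformly tight. Repeating with a summable budget $\varepsilon 2^{-i}$ over the shells yields witnessing compacta that one then wants to assemble into a single compact subset of $E$. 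This is the scheme generalising the Choquet and Hoffmann--J\o rgensen arguments for countable compact sets (cited as \cite{Ch}, \cite{H-J}).

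The main obstacle is exactly this assembly step. Because $\mu\mapsto\mu(\text{open})$ is only lower semicontinuous, mass can \emph{leak} toward $D(L)$ from measures of arbitrarily high rank, so the shell‑witnesses need not lie in a common compact subset of $E$ and the naive union of countably many of them is not compact. The delicate heart of the proof is to force the shrinking neighbourhoods of $D(L)$ to trap the escaping mass inside one fixed compact subset of $E$, with errors controlled summably along the rank stratification. That some such quantitative control is genuinely needed is shown by the Davies example of Section~\ref{Davies_example}: there $L=\{\lambda_t\}$ is compact and $B$ even admits a null $G_\delta$ hull $B^\ast=M$ with $\mu(B^\ast)=0$ for all $\mu\in L$ (so the soft, purely descriptive obstructions are absent), yet $L$ is \emph{not} uniformly tight. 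What fails for Davies, and what the present hypothesis supplies, is precisely the covering of $L$ by countably many uniformly tight sets.
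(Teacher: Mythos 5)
Your reduction to a compactification and your transfinite peeling are sound, and in fact they coincide in substance with the paper's own scheme: the paper also builds a decreasing transfinite sequence $F_0=L\supseteq F_1\supseteq\cdots\supseteq F_\alpha$ via the Baire category theorem (removing one relatively open set with uniformly tight closure at each successor step, where you remove all of $\bigcup_n \mathrm{int}_L(L_n)$ at once --- an inessential difference) and then inducts on its length. But your proposal stops exactly where the proof must actually be carried out: you write that ``the delicate heart of the proof is to force the shrinking neighbourhoods of $D(L)$ to trap the escaping mass inside one fixed compact subset of $E$'' and then do not supply that argument. This is a genuine gap, not a routine verification. Everything preceding it (finite subcover at rank zero; the remainder $L\setminus W$ lying in a single $L_N$) is soft, and, as you yourself observe via the Davies example, no soft argument can close the gap --- so a proposal that names the quantitative obstacle without resolving it has not proved the inductive step.

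For comparison, the paper closes this gap in Lemma~\ref{lemma_for_sigma_ideal} (if $A\subseteq L$ is compact and uniformly tight and every compact subset of $L$ disjoint from $A$ is uniformly tight, then $L$ is uniformly tight) with two devices you would have needed. First, with $C$ compact and $\mu(C)>1-\frac{\varepsilon}{8}$ for $\mu\in A$, for \emph{any} open $U\supseteq C$ there is a relatively open $W\supseteq A$ in $L$ with $\nu(U)>1-\frac{\varepsilon}{4}$ for \emph{all} $\nu\in W$: one picks, for each $\mu\in A$, an open $U_\mu$ with $C\subseteq U_\mu\subseteq\overline{U_\mu}\subseteq U$ and $\mu(\overline{U_\mu}\setminus U_\mu)=0$; the $\mu$-null boundary upgrades the mere lower semicontinuity of $\nu\mapsto\nu(U_\mu)$ (and upper semicontinuity of $\nu\mapsto\nu(\overline{U_\mu})$) to a genuine weak neighbourhood $\{\nu: |\nu(U_\mu)-\mu(U_\mu)|<\frac{\varepsilon}{8}\}$, and compactness of $A$ finishes. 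This is precisely what defeats the leakage you identified: the mass escaping into $E\setminus U$ is bounded by $\frac{\varepsilon}{4}$ uniformly on a whole open shell around $A$, not merely at individual measures. Second, the assembly: choosing $U_1\supseteq\overline{U_2}\supseteq\cdots\supseteq C$ shrinking down to $C$, and relatively open $W_i\downarrow A$ with $\mu(U_i)>1-\frac{\varepsilon}{4}$ on $W_i$, each $B_i=L\setminus W_i$ is compact, disjoint from $A$, hence uniformly tight with witness $D_i$, and one sets $D=C\cup D_1\cup\bigcup_{i}\overline{D_{i+1}\cap U_i}$. The truncation $D_{i+1}\cap U_i$ is the idea your sketch lacks: the shell witnesses need not lie in a common compact set a priori --- each is cut down to the shrinking neighbourhood $U_i$ of $C$, so the pieces accumulate only on the compact $C$ and $D$ is compact, while for $\mu\in W_i\setminus W_{i+1}$ the truncation costs only $\mu(E\setminus U_i)<\frac{\varepsilon}{4}$, yielding $\mu(\overline{D_{i+1}\cap U_i})\geq\mu(D_{i+1})-\mu(E\setminus U_i)>1-\frac{\varepsilon}{2}$. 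With this lemma in hand, your rank induction goes through essentially verbatim (the paper applies it with $A=F_\alpha$, checking that every compact $S\subseteq L\setminus F_\alpha$ inherits a shorter sequence); without it, the proposal is an accurate map of the proof rather than a proof.
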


We shall derive this result from the following lemma.

\begin{lemma}\label{lemma_for_sigma_ideal}
	Let $L\sub P_t(E)$ be a compact set such that for some compact $A\sub L$ which is uniformly tight, all compact sets in $L$ disjoint  from $A$ are uniformly tight. Then $L$ is uniformly tight.
\end{lemma}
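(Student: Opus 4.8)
The plan is to prove Lemma~\ref{lemma_for_sigma_ideal} by a compactness-and-contradiction argument, exploiting the interplay between uniform tightness and the weak topology on $P_t(E)$. Suppose, towards a contradiction, that $L$ is \emph{not} uniformly tight while $A\subseteq L$ is uniformly tight and every compact subset of $L$ disjoint from $A$ is uniformly tight. Fix a metric on $E$ and a compatible totally bounded metric, or work with an increasing sequence of compacta witnessing tightness. Since $A$ is uniformly tight, there are compact sets $K_1\subseteq K_2\subseteq\ldots$ in $E$ with $\m(K_j)\geq 1-\frac{1}{j}$ for all $\m\in A$; the failure of uniform tightness of $L$ means there is an $\varepsilon>0$ such that for every compact $K\subseteq E$ some measure $\m\in L$ has $\m(E\setminus K)\geq\varepsilon$.

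First I would set up a sequence of measures in $L$ witnessing the failure of tightness in a way that ``escapes'' every compact set, and extract a convergent subsequence using the compactness of $L$ in the weak topology. Concretely, choose an increasing exhaustion of $E$ by compacta (or by open sets with compact closures if $E$ is locally compact; in general use that $E$ is separable metrizable and $P_t(E)$ consists of tight measures) and pick $\m_n\in L$ that charge mass $\geq\varepsilon$ outside the $n$-th compact set. By compactness of $L$, after passing to a subsequence we may assume $\m_n\to\m_\infty\in L$ weakly. The key tension is that $\m_\infty$ is itself tight, so its mass is concentrated on compacta, while the $\m_n$ are leaking mass to infinity; I would aim to show that the limit point $\m_\infty$ cannot belong to $A$, and that the tail of the sequence together with $\m_\infty$ forms a compact subset of $L$ disjoint from $A$, contradicting the hypothesis that all such sets are uniformly tight.

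The main step is to produce, from the escaping sequence $(\m_n)$, a \emph{compact} subset of $L$ that is disjoint from $A$ yet fails to be uniformly tight, thereby contradicting the hypothesis. The natural candidate is $\{\m_\infty\}\cup\{\m_n:n\geq N\}$ for suitable $N$, which is compact as a convergent sequence with its limit. To guarantee disjointness from $A$ I would argue that $\m_\infty\notin A$: if $\m_\infty\in A$ then $\m_\infty(K_j)\geq 1-\frac1j$, and by the portmanteau theorem (lower semicontinuity of $\m\mapsto\m(U)$ on open sets, applied to open neighborhoods of $K_j$) the $\m_n$ would eventually concentrate nearly all their mass on a fixed compact set, contradicting the choice of $\m_n$. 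Having placed $\m_\infty$ outside $A$, I would thin the sequence so that the whole set stays clear of the uniformly tight set $A$ --- using that $A$ is closed (a uniformly tight set has uniformly tight, hence compact when $L$ is compact, closure, cf.\ the closure remark in Lemma~\ref{4.1}) so its complement in $L$ is open and contains $\m_\infty$ together with a tail of the sequence.

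The hard part, which I would treat carefully, is the bookkeeping that turns ``each $\m_n$ leaks mass $\geq\varepsilon$ past the $n$-th compactum'' into a genuine failure of uniform tightness of the limit set $\{\m_\infty\}\cup\{\m_n:n\ge N\}$: one must ensure the escaping mass is not reabsorbed by any single compact set valid for the whole (infinite) family. This is where the diagonal choice of the exhausting compacta matters --- I would arrange that for every compact $K\subseteq E$ there is $n$ with $K\subseteq$ the $n$-th compactum, so that no $K$ can witness uniform tightness of the tail. Once the contradiction is reached, Proposition~\ref{sigma_ideal} follows by a straightforward induction/transfinite peeling argument: writing $L=\bigcup_i A_i$ with each $A_i$ compact uniformly tight, the lemma lets me absorb the pieces one at a time, using that the c.c.c.-type smallness forces the process to terminate, so that $L$ itself is uniformly tight.
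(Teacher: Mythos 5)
There is a genuine gap, and in fact the whole contradiction strategy is structurally doomed. Your plan is to extract from a failure of uniform tightness a sequence $\mu_n\in L$ escaping compacta, pass to a weak limit $\mu_\infty\in L$, and exhibit $\{\mu_\infty\}\cup\{\mu_n:n\geq N\}$ as a compact subset of $L$ disjoint from $A$ that is not uniformly tight. But this set is a \emph{countable} compact subset of $P_t(E)$, and by the Hoffmann-J{\o}rgensen--Choquet theorem quoted in the paper immediately before Proposition \ref{sigma_ideal} (extending Le Cam's theorem on convergent sequences), every countable compact set in $P_t(E)$ is automatically uniformly tight. So your candidate can never contradict the hypothesis: non-uniform tightness of a compactum in $P_t(E)$ is never witnessed along a convergent sequence --- this is precisely why the Preiss--Davies examples are delicate --- and no amount of thinning or diagonalizing will repair the scheme.

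Two concrete steps also fail on their own. First, your ``diagonal choice of exhausting compacta'' requires a countable family of compact sets in $E$ cofinal under inclusion among all compacta (hemicompactness). A first-countable hemicompact space is locally compact, so this fails for general separable metrizable $E$, and in particular for $\QQ$ and for the Davies space $E=(\cantor\times\cantor)\setminus M$ --- exactly the spaces the lemma is applied to; thus ``$\mu_n$ leaks mass past the $n$-th compactum'' proves nothing against an arbitrary compact $K\sub E$. Second, in ruling out $\mu_\infty\in A$, you pass from the portmanteau inequality on an \emph{open neighborhood} $U$ of $K_j$ to the conclusion that the $\mu_n$ ``concentrate nearly all their mass on a fixed compact set''; without local compactness, mass on an open set near a compactum is far weaker than mass on a compactum. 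The paper's proof confronts exactly this difficulty head-on, and directly rather than by contradiction: for each $\varepsilon>0$ it builds shrinking open sets $U_1\supseteq\overline{U_2}\supseteq\cdots\supseteq C$ with $\mathrm{dist}(C,E\setminus U_k)\to 0$, paired with relatively open $W_i\supseteq A$ in $L$ satisfying $\bigcap_i W_i=A$ and $\mu(U_i)>1-\varepsilon/4$ on $W_i$; the hypothesis is then applied to the (typically uncountable) compact sets $B_i=L\setminus W_i$, and the single compact witness $D=C\cup D_1\cup\bigcup_{i}\overline{D_{i+1}\cap U_i}$ is assembled, its compactness resting exactly on the distance condition you have no substitute for. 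If you want to salvage a proof, you must follow some such direct construction; the sequential compactness-and-contradiction route cannot reach the conclusion.
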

	
\begin{proof}
Let $\varepsilon>0$, and let $C\sub E$ be a compact set such that 
	
\begin{enumerate}
	\item[(1)] $\mu(C)> 1- \frac{\varepsilon}{8}$ for all $\mu\in A$. 
\end{enumerate}	

The key observation is the following

\begin{claim}
	If $U$ is an open set in $E$ containing $C$, then there exists a relatively open subset $W$ of $L$ containing $A$ such that $\nu(U) >1- \frac{\varepsilon}{4}$ for all $\nu\in W$. 
\end{claim}
	
To justify the claim, for each $\mu\in A$, let us pick an open subset $U_\mu$ of $E$ such that  $C\sub U_\mu\sub\overline{U_\mu}\sub U$ and $\mu(\overline{U_\mu}\setminus U_\mu)=0$. Then
$$
W_\mu=\Bigl\{\nu\in P_t(E): |\nu(U_\mu)- \mu(U_\mu)|<\frac{\varepsilon}{8}  \Bigr\}
$$
is an open neighbourhood of $\mu$ in $P_t(E)$.

By compactness of $A$, there are $\mu_1,\ldots, \mu_k\in A$ such that $W_{\mu_1},\ldots,W_{\mu_k}$ cover $A$, and let 
$$
W=L\cap (W_{\mu_1}\cup\ldots\cup W_{\mu_k} ).
$$ 

If $\nu\in W$, then $\nu\in W_{\mu_i}$ for some $i\leq k$, and then, cf. (1),
$$
\nu(U)\geq \nu(U_{\mu_i})\geq \mu_i(U_{\mu_i})- \frac{\varepsilon}{8} > \mu_i(C)- \frac{\varepsilon}{8} > 1 - \frac{\varepsilon}{4}
$$
which ends the proof of the claim.

\smallskip

Using this observation one can inductively define open subsets $U_1, U_2,\ldots$ of $E$ such that
\begin{enumerate}
	\item[(2)] $U_1\supseteq \overline{U_2}\supseteq  U_2\supseteq \overline{U_3}\supseteq U_3\ldots \supseteq C$,
\end{enumerate}
and relatively open in $L$ sets
  \begin{enumerate}
  	\item[(3)] $W_1\supseteq  W_2\supseteq  W_3\ldots \supseteq A$,
  \end{enumerate}
such that 
 \begin{enumerate}
	\item[(4)] $dist(C, E\setminus U_k)\to 0,\ \bigcap_i W_i = A$,
\end{enumerate}
and
 \begin{enumerate}
	\item[(5)] $\mu(U_i)> 1 - \frac{\varepsilon}{4} $ for all $\mu\in W_i$.
\end{enumerate}

By the assumption,
 \begin{enumerate}
	\item[(6)] $B_i=L\setminus W_i $ is uniformly tight, $i=1,2,\ldots$
\end{enumerate}
and let $D_i\sub E$ be a compact set such that
 \begin{enumerate}
	\item[(7)] $\mu(D_i)> 1 - \frac{\varepsilon}{4} $ for all $\mu\in B_i$, $i=1,2,\ldots$.
\end{enumerate}

By (4), the set
\begin{enumerate}
	\item[(8)] $D=C\cup D_1\cup \bigcup_{i=1}^\infty \overline{D_{i+1}\cap U_{i}}$ 
\end{enumerate}
is compact.

Let $\mu\in L\setminus A$. 

If $\mu\not\in W_1$, then $\mu(D)\geq \mu(D_1)> 1 - \frac{\varepsilon}{4}$, by (6) and (7).

If $\mu\in W_1$, let us pick $i$ such that $\mu\in W_i\setminus W_{i+1}$, cf. (4). Then $\mu\in B_{i+1}$, cf. (6), and by (5) and (7), $\mu(E\setminus U_i)<\frac{\varepsilon}{4}$ and $\mu(D_{i+1})> 1 - \frac{\varepsilon}{4}$. It follows that
$$
\mu(\overline{D_{i+1}\cap U_{i}})\geq \mu(D_{i+1})-\mu(E\setminus U_i)> 1 - \frac{\varepsilon}{2}.
$$

In effect, by (1) and (8), $\mu(D)> 1 - \frac{\varepsilon}{2}$ for all $\mu\in L$, completing the proof of Lemma \ref{lemma_for_sigma_ideal}.

\end{proof}

We are now ready to complete the proof of Proposition \ref{sigma_ideal}.

\begin{proof}[Proof of Proposition \ref{sigma_ideal}]
Let a compact set $L\sub P_t(E)$ be a countable union of compact uniformly tight sets. 


 One can inductively construct a transfinite sequence of compact sets 
$$
F_0=L\supseteq F_1\supseteq\ldots\supseteq F_\xi\supseteq\ldots\supseteq F_\alpha
$$ 	
where $0\leq\alpha<\omega_1$, such that 
\begin{enumerate}
\item[(9)] $F_0=L$, 
	\item[(10)] $\overline{F_\xi\setminus F_{\xi+1}}$ is uniformly tight for $\xi<\alpha$,
		\item[(11)] $F_\lambda=\bigcap_{\xi<\lambda}F_\xi$ for limit $\lambda\leq \alpha$,
		\item[(12)] $F_\alpha$ is uniformly tight.
		
\end{enumerate}	

We start with $F_0=L$.

At the successor step, if $F_\xi$ is uniformly tight, then we complete the construction by letting $\alpha=\xi$.
Otherwise,  since $F_\xi$ is covered by countably many  uniformly tight compacta, the Baire category theorem yields a relatively open  set $U_\xi\subsetneq F_\xi$ whose closure is uniformly tight, and we let $F_{\xi+1}=F_\xi \setminus U_\xi$.


We shall now check by induction on $\alpha<\omega_1$ the following fact. 

\begin{claim}
	If $\alpha<\omega_1$ and a compactum  $L\sub P_t(E)$ admits a sequence $(F_\xi)_{\xi\leq\alpha}$ satisfying conditions (9)--(12), then $L$ is uniformly tight. 
\end{claim}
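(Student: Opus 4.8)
The plan is to prove the Claim by transfinite induction on $\alpha$, keeping the compactum $L$ (together with its witnessing sequence $(F_\xi)_{\xi\le\alpha}$) universally quantified, so that at each stage the inductive hypothesis can be re-applied to strictly shorter sequences that live on arbitrary compact \emph{subsets} of $L$. The base case $\alpha=0$ is immediate, since then $L=F_0=F_\alpha$ is uniformly tight by (12). For the inductive step I would treat the successor and limit cases separately, the limit case being where Lemma \ref{lemma_for_sigma_ideal} does the real work.

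For a successor $\alpha=\beta+1$ the argument is elementary. From (10) with $\xi=\beta$, the set $\overline{F_\beta\setminus F_{\beta+1}}$ is uniformly tight, while $F_{\beta+1}=F_\alpha$ is uniformly tight by (12). Since $F_\beta\sub\overline{F_\beta\setminus F_{\beta+1}}\cup F_{\beta+1}$, and a finite union of uniformly tight sets is again uniformly tight (and subsets of uniformly tight sets are uniformly tight), $F_\beta$ is uniformly tight. The truncated sequence $(F_\xi)_{\xi\le\beta}$ then satisfies (9)--(12) for the ordinal $\beta<\alpha$, so applying the inductive hypothesis to $L$ shows that $L$ is uniformly tight.

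The limit case $\alpha=\lambda$ is the crux, and here I would invoke Lemma \ref{lemma_for_sigma_ideal} with $A=F_\lambda$: by (12) this is a compact, uniformly tight subset of $L$, so it suffices to prove that every compact set $C\sub L$ disjoint from $F_\lambda$ is uniformly tight. The key is a compactness observation: the compacta $C\cap F_\xi$ ($\xi<\lambda$) form a decreasing chain with $\bigcap_{\xi<\lambda}(C\cap F_\xi)=C\cap F_\lambda=\emptyset$ (using (11)), so by the finite intersection property $C\cap F_\xi=\emptyset$ for some $\xi<\lambda$. I would then put $G_\eta=C\cap F_\eta$ for $\eta\le\xi$ and verify that $(G_\eta)_{\eta\le\xi}$ satisfies (9)--(12) relative to the compactum $C$: indeed $G_0=C$, the final set $G_\xi=\emptyset$ is uniformly tight, $G_\mu=\bigcap_{\eta<\mu}G_\eta$ at limits $\mu\le\xi$, and $\overline{G_\eta\setminus G_{\eta+1}}\sub\overline{F_\eta\setminus F_{\eta+1}}$ is uniformly tight by (10). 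As $\xi<\lambda=\alpha$, the inductive hypothesis (now applied to $C$) yields that $C$ is uniformly tight, which completes the verification of the hypotheses of Lemma \ref{lemma_for_sigma_ideal} and hence gives that $L$ is uniformly tight.

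The step I expect to require the most care is this limit case: it is essential that the Claim be stated for an arbitrary compactum, so that the inductive hypothesis may be re-used on the subsets $C$ via the shorter sequences $(C\cap F_\eta)_{\eta\le\xi}$, and it is Lemma \ref{lemma_for_sigma_ideal} that assembles the uniform tightness of all such $C$, together with that of $F_\lambda$, into uniform tightness of $L$. The degenerate possibility $F_\lambda=\emptyset$ causes no trouble, since the same compactness argument still produces, for every compact $C\sub L$, an ordinal $\xi<\lambda$ with $C\cap F_\xi=\emptyset$, so the hypotheses of the lemma (with $A=\emptyset$) are checked exactly as before. Once the Claim is proved, Proposition \ref{sigma_ideal} follows at once by applying it to the sequence $(F_\xi)_{\xi\le\alpha}$ constructed immediately above.
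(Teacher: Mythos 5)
Your proof is correct and takes essentially the same route as the paper: transfinite induction on $\alpha$, with the compactness (finite intersection) argument at limit stages producing $\xi<\lambda$ with $C\cap F_\xi=\emptyset$, the truncated sequences $(C\cap F_\eta)_{\eta\le\xi}$ feeding the inductive hypothesis, and Lemma \ref{lemma_for_sigma_ideal} with $A=F_\alpha$ assembling the conclusion. The only (cosmetic) difference is at successor stages: the paper routes these through Lemma \ref{lemma_for_sigma_ideal} as well, applying the inductive hypothesis to $(F_\xi\cap S)_{\xi\le\beta}$ on compact subsets $S\subseteq L\setminus F_\alpha$, whereas you truncate on $L$ itself after observing that $F_\beta\subseteq\overline{F_\beta\setminus F_{\beta+1}}\cup F_{\beta+1}$ is uniformly tight, which slightly simplifies that case.
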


If $\alpha=0$, then there is nothing to do, so let us assume that $\alpha>0$ and  the claim holds true for all $\beta<\alpha$. Let  $(F_\xi)_{\xi\leq\alpha}$ be a sequence satisfying conditions (9)--(12) for a given compactum $L\sub P_t(E)$. To prove that $L$ is uniformly tight, it suffices to check that each compact set in $L$ disjoint from $F_\alpha$ is uniformly tight, and then Lemma \ref{lemma_for_sigma_ideal} provides readily the assertion.

So let $S$ be any compact subset of $L\setminus F_\alpha$. Let us consider two cases.

\smallskip

{\sl Case 1.} $\alpha$ is a limit ordinal. Then 
$F_\alpha=\bigcap_{\xi<\alpha}F_\xi$, cf. (11). By compactness, there is $\beta<\alpha$ such that $S\cap F_\beta=\emptyset$. 

\smallskip

{\sl Case 2.} $\alpha=\beta+1$. Then, by (10) and (12), $F_\beta$ is uniformly tight.

\smallskip

In each case $(F_\xi\cap S)_{\xi\leq\beta}$ witnesses that $S$ admits a shorter sequence  satisfying conditions (9)--(12), so by the inductive assumption, $S$ is uniformly tight.


\end{proof}

\section{Comments}\label{Comments}




\subsection{The \s-ideal of uniformly tight sets in $P(\QQ)$}

(A) We say that a \s-ideal $I$ of compact sets in a compactum $X$ is {\sl calibrated} (cf. \cite{k-l-w}) 
if for any compact set $A\not\in I$, 

\begin{enumerate}
	\item[$(*)$] whenever $A_1, A_2,\ldots\in I$, there is a compactum not in $I$, contained in $A\setminus (A_1\cup A_2\cup\ldots)$.
\end{enumerate}

Let $K$ be the compactum in  $P(\QQ)$ constructed in the proof of Theorem \ref{main} and let $I_{ut}(K)$ be the collection of compact uniformly tight sets in $K$. By Proposition \ref{sigma_ideal}, 
$I_{ut(K)}$ is a \s-ideal of  compact sets in $K$, cf. Section \ref{thin}. Moreover, if we let $J_{ut}(K)$ be the collection of all Borel uniformly tight sets in $K$, then 
$J_{ut}$ is a \s-ideal on $K$, generated by compact sets (the latter follows from the fact that the closure of a uniformly tight set is always a compact uniformly tight set, which constitutes a part of the Prokhorov theorem, cf. \cite[Theorem 6.7]{Par}

Theorem \ref{main}(ii) combined with Proposition \ref{sigma_ideal} show that every open set in $K$ contains 
a compact set $A\not\in I_{ut}(K)$ with property $(*)$ for $I=I_{ut}(K)$. However, we do not know if  $I_{ut}(K)$ is calibrated. If this were indeed the case we would have the ``1-1 or constant" property for Borel sets in $K$, cf. \cite{SZ}, which would considerably strengthen the assertion formulated in Section \ref{1-1} just before Proposition \ref{sigma_ideal}.

\smallskip

Let us notice that the reasoning in Section \ref{P(Q)} shows in fact that, in the following game involving two players, the second player always has 
a winning strategy:
the first player chooses compact uniformly tight sets $A_1, A_2,\ldots$ in $P(\QQ)$, the response of the second player to the move $A_i$ of the first player is a compact set $K_i$ in $P(\QQ)$ disjoint from $A_i$, and the second player wins if $\bigcap_i K_i$ is not uniformly tight.  

\medskip

(B) Refining the construction in Section \ref{P(Q)} one can show that the \s-ideal $I_{ut}(K)$ in not analytic. We do not know, what is the exact descriptive complexity of this \s-ideal (in particular, whether it is coanalytic).

\subsection{The supports of measures}

Given a separable metrizable space $E$, a theorem of Balkema \cite{Ba} (cf. also \cite{T}) asserts that any compact set $K$ in $P_t(E)$ whose elements have compact supports, is uniformly tight.

The measures in the non-uniformly tight  compactum $K$ in  $P(\QQ)$, constructed in the proof of Theorem \ref{main}, have locally compact supports. Moreover, identifying (as we did) $\QQ$ with the set of points in $\cantor$ with finite supports, one can see that for any $\mu\in K$, the closure $\overline{\supp(\mu)}$ of $\supp(\mu)$ in $\cantor$ adds at most one point and, in particular, $\overline{\supp(\mu)}$ is a compact scattered subset of $\cantor$. Consequently, the collection $\A=\{\overline{\supp(\mu)}:\mu\in K \}$ is an analytic set in $K(\cantor)$ consisting of scattered sets, and  by a classical Hurewicz's theorem \cite{hur}, there is $\alpha<\omega_1$ such that for any $\mu\in K$, the Cantor-Bendixson index of $\supp(\mu)$ is bounded by $\alpha$. We do not know, what is the minimal possible bound $\alpha$ in this situation.

\subsection{Any Mazurkiewicz's function is strongly non-\s-continuous}

Let us recall that a function $f: T\w \cantor$ on a subset $T$ of $\cantor$ is {\sl \s-continuous}, if $T$ can be decomposed into countably many sets $T_i$ such that each restriction $f|T_i: T_i\w \cantor$ is continuous, cf. \cite{sol}, \cite{PS}.

Let $F\sub\cantor\times\cantor$ be a compact set with the following property: for each compact $C\sub \cantor\times\cantor$ with $\pi(C)=\cantor$ there is $t\in\pi(F)$ such that $(\{t\}\times\cantor)\cap F\sub C$ (such ``diagonal" compacta $F$ appear in the Mazurkiewicz construction discussed in Section 2).

Then for any $f:\pi(F)\w \cantor$ whose graph is contained in $F$ and every perfect set $L\sub\cantor$, the restriction $f|f^{-1}(L)$ is not \s-continuous.

This can be verified by a reasoning similar to that used in the proof of property (2) in Lemma \ref{4.1}.

\bibliographystyle{amsplain}

\end{document}